\newtheorem{theorem}{Theorem}[section]
\newtheorem{lemma}[theorem]{Lemma}
\newtheorem{proposition}[theorem]{Proposition}
\newtheorem{corollary}[theorem]{Corollary}   
\newtheorem{definition}[theorem]{Definition}
\newtheorem{example}[theorem]{Example}
\newtheorem{remark}[theorem]{Remark}
\newtheorem{question}[theorem]{Question}
\numberwithin{equation}{section}
\newtheorem{algorithm}{Algorithm}
\begin{document}
\title[$\mathbf{CCM}$-completion]{closed Cohen-Macaulay completion of binomial edge ideals}
\author{
Kamalesh Saha
\and
Indranath Sengupta
}
\date{}

\address{\small \rm  Discipline of Mathematics, IIT Gandhinagar, Palaj, Gandhinagar, 
Gujarat 382355, INDIA.}
\email{kamalesh.saha@iitgn.ac.in}

\address{\small \rm  Discipline of Mathematics, IIT Gandhinagar, Palaj, Gandhinagar, 
Gujarat 382355, INDIA.}
\email{indranathsg@iitgn.ac.in}
\thanks{The second author is the corresponding author}

\date{}

\subjclass[2020]{Primary 05E40, 68Q25, 13H10, 13F65.}

\keywords{Completion, Binomial edge ideals, closed graphs, Cohen-Macaulay, unmixed.}

\allowdisplaybreaks

\begin{abstract}
Let $\mathbf{CCM}$ denote the class of closed graphs with 
Cohen-Macaulay binomial edge ideals and $\mathbf{PIG}$ denote the class of 
proper interval graphs. Then $\mathbf{CCM}\subseteq \mathbf{PIG}$. The 
$\mathbf{PIG}$-completion problem is a classical problem in molecular biology 
as well as in graph theory and this problem is known to be NP-hard. In this paper, 
we study the $\mathbf{CCM}$-completion problem. We give a method to 
construct all possible $\mathbf{CCM}$-completion of a graph. We find 
the $\mathbf{CCM}$-completion number and the set of all minimal 
$\mathbf{CCM}$-completions for a large class of graphs. Moreover, 
for that class, we give a polynomial-time algorithm to compute the 
$\mathbf{CCM}$-completion number and a minimum $\mathbf{CCM}$-completion 
of a given graph. We investigate unmixed and Cohen-Macaulay properties of 
binomial edge ideals of induced subgraphs. Also, we discuss the accessible 
graphs completion and the Cohen-Macaulay property of binomial edge ideals 
of whisker graphs.
\end{abstract}

\maketitle

\section{Introduction}

Two interesting connections between simple graphs and polynomial ideals 
are through edge ideals (see \cite{vil}) and binomial edge ideals (see \cite{hhhrkara}, 
\cite{oh}). Cohen-Macaulay graph ideals play a very important role in the field of 
combinatorial commutative algebra. Cohen-Macaulay edge ideals from arbitrary 
graphs can be constructed by adding some whiskers to the graph; see \cite{vi}. This has 
been generalized for $d$-uniform clutters through grafting; see \cite{F}. Some constructions 
of Cohen-Macaulay binomial edge ideals have been studied in \cite{raufrin}, using gluing 
of graphs and cone on graphs. Several classifications of Cohen-Macaulay and the unmixed property 
of binomial edge ideals have been investigated in \cite {mont}, \cite{bms}, \cite{bms1}, \cite{ehh}, \cite{km}, \cite{lmrr}, \cite{ms}, \cite{cactus}, \cite{rin_smaldev}. In the field 
of binomial edge ideals, the study of complexity of a problem is an area which is 
mostly untouched. In this paper, we study the complexity of 
constructions of closed Cohen-Macaulay binomial edge ideals.
\medskip

Let $\Pi$ be a class of graphs. A \textit{$\Pi$-completion} of a graph $G$ 
is a graph $H$ such that $V(H)=V(G)$, $E(G)\subseteq E(H)$, and $H\in \Pi$. The edges in 
$E(H)\setminus E(G)$ are called \textit{fill edges}. We write 
$\vert E(H)\setminus E(G)\vert=\Pi_{H}(G)$. A $\Pi$-completion $[G]$ of $G$ is said to be 
\textit{minimal} if there exists no $\Pi$-completion $H$ of $G$ such that 
$E(G)\subseteq E(H)\subsetneq E([G])$. A \textit{minimum $\Pi$-completion} of $G$ is a 
$\Pi$-completion $[G]$ of $G$ such that for any $\Pi$-completion $H$ of $G$, we have 
$\Pi_{H}(G)\geq \Pi_{[G]}(G)$. In this case, $\Pi_{[G]}(G)$ is called the 
\textit{$\Pi$-completion number} of $G$, and we denote it by $\Pi(G)$, i.e., 
the minimum number of edges required to add in $G$ to get a $\Pi$-completion of $G$ 
is $\Pi(G)$. The \textit{$\Pi$-completion problem} is to find $\Pi(G)$ of a given 
graph $G$. There are various types of completion problems in mathematics which have  
been tackled through graph theory, for example, the matrix completion problem (\cite{matcomp}) and 
the network completion problem (\cite{netcomp}). Graph completion problems like CHORDAL-completion, 
STRONGLY CHORDAL-completion, INTERVAL GRAPH- completion, have a rich history and motivations 
to study (see \cite{kaplan99}, \cite{rst06}). These problems have several practical 
applications in the fields like graph modelling (missing edges correspond to 
lack of data), molecular biology and computational algebra.
\medskip

The study of binomial edge ideals started through a particular class 
of graphs, known as \textit{closed graphs}. Later it was proved that a graph is closed if and only 
if it is a proper interval graph or $\mathbf{PIG}$ (see \cite{hho}). Among various 
graph completion problems, the study of $\mathbf{PIG}$-completion problem has become popular 
because of its presence in molecular biology, in particular, the structural 
study of DNA (see \cite{gks94} for details). Some work have been done in this direction in 
terms of the complexity of the problem (see \cite{dross21}, \cite{kaplan99}, \cite{rst06}). 
Since PIG-completion problem is known to be NP-hard (\cite{gks94}), it is 
common to study the complexity of $\Pi$-completion problem for some subclasses $\Pi$ of $\mathbf{PIG}$. 
Let us denote the class of closed graphs $G$ with Cohen-Macaulay $J_{G}$ by $\mathbf{CCM}$, where $J_{G}$ 
denotes the binomial edge ideal of $G$; clearly $\mathbf{CCM} \subseteq \mathbf{PIG}$. This paper is 
devoted to the study of complexity of the $\mathbf{CCM}$-completion problem.
\medskip

Our primary aim in this paper is to give a method to construct a graph $[G]$, 
which is a $\mathbf{CCM}$-completion of a given graph $G$, and 
study the complexity of this construction. In this construction, we choose an arbitrary 
labelling of the graph and keep adding suitable edges only. For an arbitrary graph, 
our construction can compute all possible $\mathbf{CCM}$-completion of that graph 
by taking all possible labelling. But, for a graph $G$ on $[n]=\{1,\ldots,n\}$, this 
method would take $n!$ iteration to compute $\mathbf{CCM}(G)$. The question that 
emerges from our construction is whether we can find $\mathbf{CCM}(G)$, and a 
minimum $\mathbf{CCM}$-completion $[G]$ of a given graph $G$ in polynomial time 
or not. Finally, for a large class of graphs, we give a polynomial-time algorithm 
to compute a minimum $\mathbf{CCM}$-completion and $\mathbf{CCM}$-completion number. 
We also study some unmixed and Cohen-Macaulay properties of binomial edge ideals in 
terms of subgraphs. The paper is arranged in the following manner.
\medskip

We first discuss some preliminaries in Section \ref{ccmpreli}, which 
are required for the subsequent discussions. The rest of the paper is divided into two parts. 
The first part (Sections \ref{ccm-completion}) is devoted to the $\mathbf{CCM}$-completion problem, 
and the second part (Section \ref{ccmsubgraph}) is devoted to the study of unmixed 
and Cohen-Macaulay properties of binomial edge ideals of induced subgraphs. To be precise, 
in Section \ref{ccm-completion}, we give a method to construct a graph $[G]$, which is a 
$\mathbf{CCM}$-completion of an arbitrary given graph $G$. The first step in the construction 
is to make the graph $G$ closed. Then we construct the graph $[G]$ by adding some special edges 
to $G$. By this process, we can find all possible $\mathbf{CCM}$-completion of a given graph 
by taking all possible labelling of the vertices. In Proposition \ref{closedpath}, we show that 
the block graph of a connected closed graph is a path, and in Proposition \ref{closedindecom}, 
we prove that an indecomposable closed graph has no cut vertex. We give the explicit structure 
of closed graphs with Cohen-Macaulay binomial edge ideals (see Theorem \ref{closedcmstruc}), 
which help us identify $\mathbf{CCM}$ graphs without knowing the labelling of the vertices. 
In Remark \ref{remarkclosedcm}, we discuss some properties of $J_{G}$ for $G\in \mathbf{CCM}$. 
Since the block graph of a graph with unmixed binomial edge ideal is a tree 
(by \cite[Proposition 1.3]{cactus}), it is worth considering the class of graphs whose block 
graphs are trees, and we denote this class by $\mathbf{BT}$. In Theorem \ref{smallclosedcm}, 
for a graph $G\in \mathbf{BT}$, we find the $\mathbf{CCM}$-completion number $\mathbf{CCM}(G)$, 
and the set of all minimal $\mathbf{CCM}$-completions of $G$. We 
write an algorithm (Algorithm \ref{algclosedcm}) to compute the $\mathbf{CCM}$-completion 
number $\mathbf{CCM}(G)$ and a 
minimum $\mathbf{CCM}$-completion $[G]$ of a given graph $G\in\mathbf{BT}$. We study the time 
complexity of the algorithm, and our algorithm turns out to be a polynomial-time algorithm. 
However, in general, it remains to check whether the $\mathbf{CCM}$-completion problem is NP-hard 
or not. So, we end this section by Question \ref{quesccm-comp}. In Section \ref{ccmsubgraph}, we 
try to give some conditions for unmixed (respectively, Cohen-Macaulay) property of binomial edge 
ideals of subgraphs of those graphs whose binomial edge ideal is unmixed 
(respectively, Cohen-Macaulay). In Theorem \ref{um9}, we show that for any closed graph $G$ 
if $J_{G}$ is Cohen-Macaulay, then $J_{H}$ is Cohen-Macaulay for any subgraph $H$ of $G$. 
We prove that any graph with unmixed binomial edge ideal has an accessible completion 
(Proposition \ref{acc-construct}), and propose the Question \ref{quesacc-comp}. We end up by 
giving the sufficient and necessary conditions (Theorem \ref{whiskercm}) for the Cohen-Macaulay 
property of binomial edge ideals of whisker graphs.

\section{Preliminaries}\label{ccmpreli}

All graphs are assumed to be simple. For a graph $G$, $V(G)$ denotes the vertex set of $G$, and $E(G)$ denotes the edge set of $G$. This paper uses the term subgraph to mean an induced subgraph of a graph.
\medskip

Let $V = \lbrace x_{1},\ldots, x_{n}\rbrace$. A \textit{simplicial complex} $\Delta$ on the vertex set $V$ is a collection
of subsets of $V$, with the following properties:
\begin{enumerate}
\item[(i)] $\lbrace x_{i}\rbrace\in \Delta$ for all $x_{i}\in V$;
\item[(ii)] $F\in \Delta$ and $G\subseteq F$ imply $G\in \Delta$.
\end{enumerate}

An element $F\in \Delta$ is called a \textit{face} of $\Delta$. A maximal face of $\Delta$ is called a \textit{facet} of 
$\Delta$. A vertex $i$ of $\Delta$ is called a 
\textit{free vertex} of 
$\Delta$ if $i$ belongs to exactly one facet.

\begin{definition}{\rm
A graph is said to be \textit{complete} if there is an edge between every pair of two vertices. Complete graph on $n$ vertices is denoted by $K_{n}$. A \textit{clique} of a given graph is a subset of vertex set on which the induced subgraph is complete.
}
\end{definition}
\begin{definition}{\rm 
The \textit{clique complex} $\Delta(G)$ of a graph $G$ is the 
simplicial complex whose faces are the cliques of $G$. 
Hence, a vertex $v$ of a graph $G$ is called 
\textit{free vertex} if it belongs to only one 
maximal clique of $\Delta(G)$.
}
\end{definition}

\begin{definition}{\rm
A \textit{path graph} $P$ on $n$ vertices is a graph whose vertices can be ordered as $v_{1},\ldots, v_{n}$ such that $E(P)=\{\{v_{i},v_{i+1}\}\mid 1\leq i\leq n-1\}$.
}
\end{definition}

\begin{definition}{\rm
A \textit{path} in a graph $G$ is a sequence of vertices $ v_{0},v_{1},\ldots, v_{s}$ such that $\{v_{i}, v_{i+1}\} \in E(G)$ for $i =0,\ldots,s-1$. If $G$ is labelled with $V(G)=[n]$, then a path $i_{0},\ldots,i_{s}$ is said to be \textit{directed} if either $i_{k} < i_{k+1}$ for all $k$ or $i_{k} > i_{k+1}$ for all $k$.
}
\end{definition}

Let $G$ be a graph and $v$ be a vertex of $G$. The \textit{neighbor set} of $v$ in $G$, denoted by $\mathcal{N}_{G}(v)$, is $\mathcal{N}_{G}(v)=\{u\in V(G)\mid \{u,v\}\in E(G)\}$. We write $\mathcal{N}_{G}[v]$ to denote the set $\mathcal{N}_{G}(v)\cup\{v\}$. Note that for a free vertex $v$, $\mathcal{N}_{G}[v]=F$, where $F$ is the facet of $\Delta(G)$ with $v\in F$. A vertex $v$ is said to be a \textit{cut vertex} or \textit{cut point} of $G$, if $G\setminus \{v\}$ has more number of connected components than $G$.

\begin{definition}{\rm 
Let $G$ be a graph on the vertex set $V(G)=[n]$ and $K$ be a field. The \textit{binomial edge ideal} $J_{G}\subseteq R=K[x_{1},\ldots,x_{n},y_{1},\ldots,y_{n}]$ of $G$ is the ideal generated by the binomials $f_{ij} = x_{i}y_{j}-x_{j}y_{i}$, such that $i<j$ and $\{ i, j\}\in E(G)$, where $E(G)$ denotes the edge set of $G$.
}
\end{definition}

Let us first recall some notations and results from \cite{hho} and \cite{raufrin}.

\begin{theorem}[{\cite[Theorem 7.2]{hho}}]\label{quadgb} Let $G$ be a graph on $[n]$, and let $<$ be the lexicographic order on $R=K[x_{1},\ldots,x_{n}, y_{1},\ldots,y_{n}]$, induced by $x_{1} > x_{2} >\cdots > x_{n} > y_{1} > y_{2} >\cdots > y_{n}$. The following conditions are equivalent:
\begin{enumerate}[(i)]
\item The generators $f_{ij}$ of $J_{G}$ form a quadratic 
Gr\"{o}bner basis;

\item For all edges $\lbrace i, j\rbrace$ and $\lbrace k, l\rbrace$ 
with $i < j$ and $k < l$, one has $\lbrace j, l\rbrace\in E(G)$ if $i = k$, 
and $\lbrace i, k\rbrace\in E(G)$ if $j = l$.
\end{enumerate}
\end{theorem}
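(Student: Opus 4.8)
The plan is to prove the equivalence of (i) and (ii) by a direct analysis of the $S$-polynomials among the generators $f_{ij}$ under the given lexicographic order, using Buchberger's criterion. First I would record the leading terms: under $x_1 > \cdots > x_n > y_1 > \cdots > y_n$, the initial term of $f_{ij} = x_iy_j - x_jy_i$ (with $i<j$) is $\operatorname{in}_<(f_{ij}) = x_iy_j$, since $x_iy_j > x_jy_i$ in this order. So the leading terms are squarefree quadratic monomials, and $\{f_{ij}\}$ is a quadratic Gröbner basis precisely when all nonzero $S$-polynomials reduce to $0$ modulo $\{f_{ij}\}$.

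Next I would classify the pairs of edges for which the $S$-polynomial is potentially nontrivial, i.e.\ when $\gcd(\operatorname{in}_<(f_{ij}), \operatorname{in}_<(f_{kl})) \neq 1$. With $i<j$, $k<l$, the leading monomials $x_iy_j$ and $x_ky_l$ share a variable exactly in the cases $i=k$, or $j=l$, or $\{j\} = \{k\}$ type overlaps where an $x$-variable meets a $y$-variable — but since $x$'s and $y$'s are disjoint variable sets, the only overlaps are $i=k$ (sharing $x_i$) and $j=l$ (sharing $y_j$); the mixed case $x_iy_j$ vs $x_ky_l$ with $i = l$ or $j=k$ gives coprime leading terms. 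So the two relevant cases are exactly the two conditions appearing in (ii). In the case $i = k < j$, $i = k < l$ (say $j < l$ WLOG), a short computation gives $S(f_{ij}, f_{il}) = \pm x_j x_l y_i \mp \cdots$, which up to sign equals (a monomial multiple pattern of) $-y_i f_{jl}$ where $f_{jl} = x_jy_l - x_ly_j$; hence it reduces to $0$ iff $\{j,l\}\in E(G)$. Similarly the case $j = l$ produces an expression that reduces to $0$ iff $\{i,k\}\in E(G)$. This establishes (ii) $\Rightarrow$ (i) via Buchberger, and for (i) $\Rightarrow$ (ii) I would argue contrapositively: if, say, $i=k$ but $\{j,l\}\notin E(G)$, the $S$-polynomial $S(f_{ij},f_{il})$ has leading term involving $x_jx_ly_i$ or $x_jy_iy_l$ which cannot be cancelled by any $\operatorname{in}_<(f_{ab})$ unless $\{j,l\}$ is an edge, so the Gröbner basis property fails; this forces some $f_{ij}$ to have non-quadratic leading term issues, or more directly shows $\{f_{ij}\}$ is not a Gröbner basis at all.

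The main obstacle I anticipate is bookkeeping rather than conceptual: one must carefully track the signs and the $<$-ordering of the four indices $i,j,k,l$ in each overlap case (there are a couple of sub-cases depending on whether $j < l$ or $l < j$ in the $i=k$ situation, and symmetrically for $j = l$), and verify that the reduction of $S(f_{ij},f_{il})$ by $f_{jl}$ indeed terminates at $0$ and does not spawn further irreducible terms — i.e.\ that once $\{j,l\}\in E(G)$ the single reduction step suffices. A clean way to handle this is to observe the Plücker-type identity $y_i f_{jl} - y_j f_{il} + y_l f_{ij} = 0$ (the $3\times 3$ determinant relation), which shows directly that $S(f_{ij}, f_{il})$ is an $R$-combination of $f_{jl}$ alone with the correct leading-term cancellation, so the reduction is immediate. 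With that identity in hand, both implications follow cleanly from Buchberger's criterion, and the only remaining care is checking that no other monomial overlaps among the $\operatorname{in}_<(f_{ij})$ were missed — which the disjointness of the $x$- and $y$-variable blocks makes routine.
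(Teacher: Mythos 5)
The paper does not prove this statement at all: Theorem \ref{quadgb} is quoted verbatim from Herzog--Hibi--Ohsugi \cite{hho} (it goes back to the original papers on binomial edge ideals), so there is no in-paper argument to compare yours against. That said, your Buchberger-criterion argument is the standard proof of this result and it is essentially correct. The key points all check out: with $i<j$ one has $\operatorname{in}_<(f_{ij})=x_iy_j$; since the $x$- and $y$-blocks are disjoint, two leading terms $x_iy_j$ and $x_ky_l$ are non-coprime exactly when $i=k$ or $j=l$ (the ``crossed'' coincidences $j=k$ or $i=l$ give coprime leading terms, so those $S$-pairs reduce to zero by Buchberger's first criterion); and the determinantal syzygy $y_if_{jl}-y_jf_{il}+y_lf_{ij}=0$ gives $S(f_{ij},f_{il})=-y_if_{jl}$, with the analogous identity in the $j=l$ case giving $S=\pm x_jf_{ik}$, so each nontrivial $S$-polynomial is a single monomial times a $2$-minor and reduces to zero precisely when the corresponding pair is an edge. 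Your converse is also sound in its ``more direct'' form: when, say, $i=k$, $j<l$ and $\{j,l\}\notin E(G)$, the element $-y_if_{jl}\in J_G$ has leading monomial $x_jy_iy_l$, whose only possible divisor among the $\operatorname{in}_<(f_{ab})$ is $x_jy_l$, so $\operatorname{in}_<(J_G)\neq\langle \operatorname{in}_<(f_{ab})\rangle$. Two small blemishes worth fixing in a full write-up: the phrase ``leading term involving $x_jx_ly_i$ or $x_jy_iy_l$'' conflates the $j=l$ case with the $i=k$ case (in the $i=k$ case only $x_jy_iy_l$ occurs), and you should state explicitly that when $\{j,l\}$ (resp.\ $\{i,k\}$) is an edge the relevant index pair is automatically in the right order ($j<l$, resp.\ the smaller of $i,k$ first) so that the reducer really is one of the listed generators.
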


A graph $G$ is said to be \textit{closed} with respect to a given labelling of vertices if it satisfies any of the equivalent conditions of Theorem \ref{quadgb}. By a \textit{closed} graph $G$, we mean a graph $G$ which is closed with respect 
to a suitable labelling of its vertices.

\begin{proposition}{\rm (\cite{hho}; Proposition 7.24)}\label{closedCM}
Let $G$ be a connected graph on $[n]$, which is closed with respect
to the given labelling. The following conditions are equivalent:
\begin{enumerate}[(i)]
\item $J_{G}$ is unmixed;
\item $J_{G}$ is Cohen-Macaulay;
\item $G$ satisfies the condition that whenever $\lbrace i, j + 1\rbrace$, 
with $i < j$ and $\lbrace j, k + 1\rbrace$, with $j < k$, are edges of $G$, then $\lbrace i, k + 1\rbrace$ is an edge of $G$.
\end{enumerate}
\end{proposition}

Let $T\subseteq [n]$, and $\overline{T}=[n]\setminus T$. Let $G[T]$ denotes the induced subgraph of $G$ on $T$. By $G\setminus T$, we denote the induced subgraph $G[\overline{T}]$. Let $c_{G}(T)$ (sometime we write $c(T)$ if the graph is clear from the context) denotes the number of connected components of the induced subgraph on $\overline{T}$, namely $G[\overline{T}]$ or $G\setminus T$. If each $i\in T$ is a cut point of the graph $G\setminus (T\setminus\{ i\})$, then we say that $T$ has \textit{cut point property} for $G$ or $T$ is a \textit{cutset} of $G$. We denote by $\mathscr{C}(G)$ the set of all cutsets of $G$.

\begin{proposition}[{\cite[Proposition 2.1]{raufrin}}]\label{um1}
Let $G$ be a graph, $\Delta(G)$ its clique complex and $v\in V(G)$. Then the following conditions are equivalent:
\begin{enumerate}[(i)]
\item There exists $T\in \mathscr{C}(G)$, such that $v\in T$.
\item $v$ is not a free vertex of $\Delta(G)$.
\end{enumerate}
\end{proposition}

\begin{lemma}[{\cite[Lemma 2.2]{raufrin}}]\label{um2}
Let $G$ be a graph, $v\in V(G)$, such that $v$ is a free vertex in $\Delta(G)$. 
Let $F$ be the facet of $\Delta(G)$, with $v\in F$, and $T\subseteq V(G)$ with 
$F\setminus\lbrace v\rbrace\not\subseteq T$. The following conditions are equivalent:
\begin{enumerate}[(i)]
\item $T\in\mathscr{C}(G)$.
\item $v\not\in T$ and $T\in\mathscr{C}(G\setminus\lbrace v\rbrace)$.
\end{enumerate}
\end{lemma}

\begin{lemma}[{\cite[Lemma 2.5]{raufrin}}]\label{um3}
Let $G$ be a connected graph. The following conditions are equivalent:
\begin{enumerate}[(i)]
\item $J_{G}$ is unmixed.
\item For all $T\in\mathscr{C}(G)$, we have $c(T)=\vert T\vert +1$.
\end{enumerate}
\end{lemma}

\begin{definition}[{\cite[Definition 2.2]{bms1}}]{\rm
Let $G$ be a simple graph. A cutset $T\in \mathscr{C}(G)$ is said to be \textit{accessible} if there exists $v\in T$ such that $T\setminus\{v\}\in \mathscr{C}(G)$. The graph $G$ is called \textit{accessible} if $J_{G}$ is unmixed and $T$ is accessible for all $T\in \mathscr{C}(G)$.
}
\end{definition}

\begin{definition}{\rm
Let $B_{1},\ldots, B_{r}$ be the blocks of a graph $G$. The \textit{block graph} of $G$, denoted by $\mathcal{B}(G)$, is the graph defined as follows:
\begin{enumerate}
\item[$\bullet$] $V(\mathcal{B}(G))=\{B_{1},\ldots, B_{r}\}.$
\item[$\bullet$] $E(\mathcal{B}(G))=\{\{B_{i},B_{j}\}\mid V(B_{i})\cap V(B_{j})\neq \phi\}$.
\end{enumerate}
\noindent By \cite[Proposition 1.3]{cactus}, the block graph of a connected graph with unmixed binomial edge ideal is a tree.
}
\end{definition}

For a graph $G$, the Cohen-Macaulay (resp. unmixed) property of $J_{G}$ 
is equivalent to the Cohen-Macaulay (resp. unmixed) property of the binomial edge ideals of its connected components. Therefore, we will assume all given graphs are connected unless otherwise stated.

\section{$\mathbf{CCM}$-completion Problem with Algorithm}\label{ccm-completion}
In this Section, from an arbitrary graph $G$ 
we construct a new graph $[G]$, by adding some edges to $G$, 
such that $[G]$ is a $\mathbf{CCM}$-completion of $G$. Also, 
we give an algorithm to find the $\mathbf{CCM}$-completion number 
for a large class of graphs and discuss its complexity.

\begin{definition}{\rm \label{intgraph}
A graph $G$ is called an \textit{interval graph} if for all $v\in V(G)$, 
there exists an interval $I_{v} = [l_{v}, r_{v}]$ of the real line such 
that $I_{v}\cap I_{w}\neq\phi$ if and only if $\lbrace v,w\rbrace\in E(G)$. 
If, in addition, the intervals can be chosen such that there is no 
proper containment among them, then $G$ is called a \textit{proper interval graph} 
or simply a \textit{PI} graph.
}
\end{definition}

Let $G$ be a graph. A set of intervals $\lbrace I_{v}\rbrace_{v\in V(G)}$, 
as in Definition \ref{intgraph}, is called an interval representation of G. 
Let $G$ be a graph on the vertex set $[n]$. Then $G$ satisfies the proper 
interval ordering with respect to the given labelling, if for all 
$i < j < k$, with $\lbrace i, k\rbrace\in E(G)$, it follows that 
$\lbrace i,j\rbrace, \lbrace j,k\rbrace\in E(G)$. We say that 
$G$ admits a \textit{proper interval ordering} if $G$ satisfies the proper 
interval ordering for a suitable relabelling of its vertices.

\begin{theorem}[{\cite[Theorem 1]{lo}}]\label{intordering}
A graph $G$ is a proper interval graph if and only if $G$ has a proper interval ordering.
\end{theorem}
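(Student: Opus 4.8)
The statement to establish (Theorem \ref{intordering}) is a purely combinatorial characterisation, and the plan is to prove the two implications separately, translating back and forth between an interval representation and a linear order on the vertex set. Throughout, write $n$ for the number of vertices of $G$.

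\emph{($\Rightarrow$).} Suppose $G$ is a proper interval graph, and fix an interval representation $\{I_v=[l_v,r_v]\}_{v\in V(G)}$ in which no interval properly contains another. First I would invoke a standard perturbation of the endpoints---one that alters neither the intersection graph nor the no-containment property---to assume that the $2n$ numbers $l_v,r_v$ are pairwise distinct. Relabel $V(G)$ as $[n]$ so that $l_1<l_2<\cdots<l_n$. The key observation is that the absence of proper containment, together with the distinctness of endpoints, forces $r_1<r_2<\cdots<r_n$ as well: if $i<j$ but $r_i>r_j$, then $l_i<l_j\le r_j<r_i$ exhibits $I_j\subsetneq I_i$. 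Now take $i<j<k$ with $\{i,k\}\in E(G)$, i.e.\ $I_i\cap I_k\neq\emptyset$; since $l_i<l_k$ this forces $l_k\le r_i$. Then $l_i<l_j<l_k\le r_i$ gives $l_j\in I_i$, so $\{i,j\}\in E(G)$, while $l_j<l_k\le r_i<r_j$ gives $l_k\in I_j$, so $\{j,k\}\in E(G)$. Hence the labelling $[n]$ is a proper interval ordering of $G$.

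\emph{($\Leftarrow$).} Conversely, assume $G$ on $[n]$ admits the proper interval ordering $1<2<\cdots<n$. I would first record the consequence that for each $i$ the closed neighbourhood $N_G[i]$ is an interval of consecutive integers $\{a_i,a_i+1,\dots,b_i\}$: if $a_i<m<b_i$, apply the defining property of the ordering to the edge joining $i$ to the far endpoint of $N_G[i]$ to deduce that $m\in N_G[i]$. In particular the function $f(i):=b_i=\max N_G[i]$ satisfies $f(i)\ge i$ and is non-decreasing (if $f(i)>f(i+1)$ then $\{i,f(i)\}\in E(G)$ with $i<i+1<f(i)$ forces $\{i+1,f(i)\}\in E(G)$, a contradiction, the cases $f(i)\in\{i,i+1\}$ being immediate). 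Then I would set $I_i:=[\,i,\ f(i)+i\varepsilon\,]$ for a small $\varepsilon>0$, say $\varepsilon=\tfrac1{2n}$. The left endpoints $i$ and the right endpoints $f(i)+i\varepsilon$ are both \emph{strictly} increasing in $i$, so no $I_i$ properly contains another; and for $i<j$ one checks that $I_i\cap I_j\neq\emptyset$ iff $j\le f(i)+i\varepsilon$, i.e.\ iff $j\le f(i)$ (as $0<i\varepsilon<1$ and $j,f(i)$ are integers), i.e.\ iff $\{i,j\}\in E(G)$ (using the proper interval ordering to pass between $j\le f(i)$ and adjacency). Thus $\{I_i\}_{i\in[n]}$ witnesses that $G$ is a proper interval graph.

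The only genuinely delicate point is the construction in the ($\Leftarrow$) direction. The ``obvious'' choice $I_i=[a_i,b_i]$ fails: for the path $1-2-3$ the intervals $[1,2]$ and $[2,3]$ touch although $\{1,3\}\notin E(G)$, and padding only the right ends can instead create a spurious containment among vertices sharing the same right-reach $f(\cdot)$. The perturbation by $i\varepsilon$, which makes the right endpoints strictly increasing while keeping each of them below the next integer, is precisely what separates the intervals correctly; verifying that this perturbed family still has intersection graph $G$ is the main computation, and it relies entirely on the consecutiveness of closed neighbourhoods established at the outset. The ($\Rightarrow$) direction is routine once one notices that ``no proper containment'' synchronises the left- and right-endpoint orders.
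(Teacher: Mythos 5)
The paper offers no proof of Theorem~\ref{intordering}: it is imported verbatim from Looges--Olariu \cite{lo}, so there is no in-text argument to compare yours against. Your proof is correct and self-contained, and it follows the standard route. In the ($\Rightarrow$) direction, the observation that the no-containment condition synchronises the orders of left and right endpoints (once endpoints are made distinct), after which $l_i<l_j<l_k\le r_i<r_j$ yields both required edges, is exactly right. In the ($\Leftarrow$) direction, the consecutiveness of closed neighbourhoods, the monotonicity of $f(i)=\max N_G[i]$, and the representation $I_i=[\,i,\,f(i)+i\varepsilon\,]$ with $\varepsilon=1/(2n)$ all check out: both endpoint sequences are strictly increasing (so no containments), and $j\le f(i)+i\varepsilon$ is equivalent to $j\le f(i)$ because $0<i\varepsilon<1$, which is equivalent to adjacency by the neighbourhood-consecutiveness claim. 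The only step you leave entirely implicit is the initial perturbation making the $2n$ endpoints distinct while preserving both the intersection graph and the no-proper-containment property; this is routine, but note that for two \emph{coinciding} intervals one must translate one of them slightly rather than dilate it, since dilation would create a proper containment. With that one-line remark added, the argument is complete.
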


\begin{theorem}[{\cite[Theorem 7.9]{hho}}]\label{propintgraph}
A graph $G$ is a closed graph if and only if $G$ is a proper interval graph.
\end{theorem}

\begin{remark}\label{pio}{\rm
From Theorem \ref{intordering} and Theorem \ref{propintgraph}, we can say that 
a graph $G$ is a closed graph if and only if $G$ has a proper interval ordering.
}
\end{remark}

\begin{corollary}\label{closedcriteria}
Suppose $G$ is a closed graph with respect to a suitable labelling of vertices. Let $G^{\prime}$ be the graph obtained by adding an edge $\lbrace i,k\rbrace$, 
with $i<k$ to $G$. Then $G^{\prime}$ is closed if and only if 
$\lbrace i,j\rbrace, \lbrace j,k\rbrace\in E(G^{\prime})$, for all 
$j$ with $i<j<k$.
\end{corollary}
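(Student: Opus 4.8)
The plan is to convert ``closed'' into the language of proper interval orderings (via the Remark above, that is, Theorems~\ref{intordering} and~\ref{propintgraph}), to fix once and for all a labelling of the vertices, and then to run a short case analysis that uses only the fact that $G'$ is obtained from $G$ by adding the single edge $\{i,k\}$. Fix the labelling $1<2<\cdots<n$ of $V(G)=V(G')$ with respect to which $G$ is closed, so that $i<k$ in it. By Theorem~\ref{propintgraph}, $G$ is a proper interval graph, and moreover $1<2<\cdots<n$ is itself a proper interval ordering of $G$; that is, $\{a,b\},\{b,c\}\in E(G)$ whenever $a<b<c$ and $\{a,c\}\in E(G)$. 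Since $E(G')=E(G)\cup\{\{i,k\}\}$, the whole statement reduces to: $1<2<\cdots<n$ is a proper interval ordering of $G'$ if and only if $\{i,j\},\{j,k\}\in E(G')$ for all $j$ with $i<j<k$ --- here I use that $G'$ is closed precisely when this ordering is a proper interval ordering of $G'$.

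For ``$\Leftarrow$'', assume $\{i,j\},\{j,k\}\in E(G')$ for every $j$ with $i<j<k$, and take an arbitrary triple $a<b<c$ with $\{a,c\}\in E(G')$. If $\{a,c\}\in E(G)$ then $\{a,b\},\{b,c\}\in E(G)\subseteq E(G')$, since $1<2<\cdots<n$ is a proper interval ordering of $G$; if $\{a,c\}=\{i,k\}$ then $a=i$, $c=k$ and $i<b<k$, so $\{i,b\},\{b,k\}\in E(G')$ by hypothesis. In either case $\{a,b\},\{b,c\}\in E(G')$, so $1<2<\cdots<n$ is a proper interval ordering of $G'$ and hence $G'$ is closed. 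For ``$\Rightarrow$'' I argue contrapositively: if some $j$ with $i<j<k$ has $\{i,j\}\notin E(G')$ or $\{j,k\}\notin E(G')$, then the triple $i<j<k$ together with the edge $\{i,k\}\in E(G')$ already violates the proper interval ordering condition, so $1<2<\cdots<n$ is not a proper interval ordering of $G'$ and therefore $G'$ is not closed.

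The step I expect to require genuine care --- as opposed to routine checking against the definition --- is the reduction invoked above, namely that the closedness of $G'$ may be tested on the \emph{given} labelling of $G$. One direction of this is free: in the ``$\Leftarrow$'' argument the given labelling is exhibited explicitly as a witness. For the other direction one must exclude the possibility that adjoining $\{i,k\}$ to the closed graph $G$ yields a graph that is closed with respect to some \emph{other} labelling but not with respect to the original one. I would dispatch this by noting that $G$ and $G'$ have the same vertex set, that any labelling witnessing closedness of $G'$ also witnesses closedness of $G$, and that for the graphs relevant to our construction (which are connected) the labellings witnessing closedness of $G$ are exactly its proper interval orderings --- all of which list $i$, $j$, $k$ in the order $i<j<k$. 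With this bookkeeping settled, the two short case analyses above constitute the full proof.
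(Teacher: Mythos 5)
Your first two paragraphs are correct and coincide with the paper's argument: the paper proves exactly your ``$\Leftarrow$'' case analysis (new triples $a<b<c$ with $\{a,c\}=\{i,k\}$ are handled by the hypothesis, all others by closedness of $G$), and it leaves the ``$\Rightarrow$'' direction implicit, which under the fixed-labelling reading is the one-line contrapositive you give. Had you stopped there, reading ``$G'$ is closed'' as ``closed with respect to the given labelling'' throughout, the proof would be complete and identical in substance to the paper's.

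The problem is your third paragraph. The strengthened ``only if'' you try to establish there --- that $G'$ closed with respect to \emph{some} labelling forces the condition with respect to the \emph{original} one --- is false, so no bookkeeping can rescue it. Take $G$ on $\{1,2,3,4\}$ with edges $\{1,2\},\{1,3\},\{2,3\},\{3,4\}$; this is connected and closed with respect to $1<2<3<4$. Add $\{i,k\}=\{1,4\}$. The condition fails, since $\{2,4\}\notin E(G')$, yet $G'$ (the diamond $K_4$ minus $\{2,4\}$) is a closed graph: the ordering $2<1<3<4$ is a proper interval ordering of it. Both of the auxiliary claims you lean on are also false in general: deleting an edge does not preserve closedness with respect to a given labelling (e.g.\ $K_4$ is closed with respect to $2<1<4<3$ but $K_4\setminus\{1,4\}$ is not, because $\{1,3\}$ is an edge while $\{1,4\}$ is not), and a connected proper interval graph can have proper interval orderings that permute twin vertices, so they need not all place $i,j,k$ in the order $i<j<k$ (in the example above, $1$ and $2$ are twins in $G$, and the witnessing order for $G'$ reverses them). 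The corollary is therefore only correct, and only used in the paper, with ``closed'' meaning closed with respect to the fixed labelling; you should delete the third paragraph and state that convention explicitly rather than attempt to prove the labelling-independent version.
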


\begin{proof}
We have $E(G^{\prime})=E(G)\cup \lbrace i,k\rbrace $. Since $G$ satisfies proper interval ordering 
(as $G$ is closed), for $\lbrace l,n\rbrace\in E(G)$ we have $\lbrace l,m\rbrace, \lbrace m,n\rbrace\in E(G)\subseteq E(G^{\prime})$, for all $l<m<n$. For $\lbrace i,k\rbrace\in E(G^{\prime})$ it follows from the given condition that 
$\lbrace i,j\rbrace, \lbrace j,k\rbrace\in E(G^{\prime})$, 
for all $j$ with $i<j<k$. So, $G^{\prime}$ satisfies the proper interval 
ordering and hence $G^{\prime}$ is closed.
\end{proof}

\begin{remark}\label{CMconstruct}
{\rm \textbf{(Construction of $[G]$ from $G$)} Start with an 
arbitrary connected graph $G$. If it is not closed then take 
any labelling of vertices and for all edges 
$\lbrace i, j\rbrace$, $\lbrace k, l\rbrace$ in $G$ 
with $i < j$, $k < l$, add $\lbrace j, l\rbrace$ to $E(G)$ if $i = k$, 
and add $\lbrace i, k\rbrace$ to $E(G)$ if $j = l$ and repeat the 
process with the newly obtained graph. Then after a finite number of 
steps, we get a new graph which is closed. Let us therefore 
assume that $G$ is a connected 
graph on $[n]$, which is closed with respect to a given 
labelling. We now construct a Cohen-Macaulay binomial edge ideal by adding 
some edges to $G$. If $\lbrace i,j+1\rbrace$, with $i < j$, and 
$\lbrace j, k + 1\rbrace$, with $j < k$ are edges of $G$, then we add 
the edge $\lbrace i, k + 1\rbrace$ to $G$ (if it is not there) and call 
the new graph $G'$. We repeat the process for $G'$. 
Since a finite graph has a finite number of edges, after finite 
number of steps, we would get a new graph $[G]$ on $[n]$ such that if $\lbrace i, j + 1\rbrace$ with $i < j$, 
and $\lbrace j, k + 1\rbrace$  
with $j < k$, are edges of $[G]$, then $\lbrace i, k + 1\rbrace\in E([G])$. Hence, if 
one can show that $[G]$ is closed, then 
by Proposition \ref{closedCM}, it follows that $J_{[G]}$ is Cohen-Macaulay 
and $J_{G}\subseteq J_{[G]}\subseteq K[x_{1},\ldots,x_{n},y_{1},\ldots,y_{n}]$. 
Therefore, starting with an arbitrary connected graph, we can construct a 
new graph whose corresponding binomial edge ideal is Cohen-Macaulay by 
adding some special edges. Note that the graph we obtain finally 
may not be unique; it depends on our choice of labelling the vertices 
to make it closed. Therefore, one has to label the vertices in 
such a way that one requires the least number of edges to make it closed.
}
\end{remark}

\begin{proposition}\label{closed}
The graph $G'$ constructed from $G$ by adding 
suitable edges (the procedure mentioned above) 
is closed. Hence, $[G]$ is closed.
\end{proposition}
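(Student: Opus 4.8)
The plan is to prove that $G'$ is closed by verifying the proper interval ordering condition directly, using the labelling inherited from $G$. Recall from the construction that $G$ is already closed with respect to the given labelling on $[n]$, and $G'$ is obtained from $G$ by adding a single edge $\lbrace i, k+1 \rbrace$ where $\lbrace i, j+1 \rbrace$ and $\lbrace j, k+1 \rbrace$ are edges of $G$ with $i < j$ and $j < k$ (note in particular $i < j+1 \le k < k+1$, so $i < k+1$). By Corollary \ref{closedcriteria}, since $G$ is closed and $E(G') = E(G) \cup \lbrace i, k+1 \rbrace$, it suffices to show that for every $m$ with $i < m < k+1$ we have $\lbrace i, m \rbrace, \lbrace m, k+1 \rbrace \in E(G')$.

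So fix $m$ with $i < m < k+1$, i.e. $i < m \le k$. The argument splits according to how $m$ compares with $j+1$. First I would handle the edge $\lbrace i, m \rbrace$: if $i < m \le j+1$, then since $\lbrace i, j+1 \rbrace \in E(G)$ and $G$ satisfies the proper interval ordering, we get $\lbrace i, m \rbrace \in E(G)$ (trivially an edge if $m = j+1$, and obtained from the proper interval ordering applied to $i < m < j+1$ otherwise). If instead $j+1 < m \le k$, I would use the proper interval ordering on the edge $\lbrace j, k+1 \rbrace$: since $j < m < k+1$ (using $j < j+1 < m$), we get $\lbrace j, m \rbrace \in E(G)$; combined with $\lbrace i, j+1 \rbrace \in E(G)$ this is exactly the configuration in Proposition \ref{closedCM}(iv) (with the roles: $\lbrace i, j+1\rbrace$ and $\lbrace j, m \rbrace = \lbrace j, (m-1)+1\rbrace$, and $j < m-1$ since $j + 1 < m$), but wait — that gives a conclusion about $[G]$, not $G'$. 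Instead, since $G$ is closed, I would argue directly: $\lbrace i, j+1\rbrace, \lbrace j, m\rbrace \in E(G)$ with $i < j < j+1$, and closedness (Theorem \ref{quadgb}(ii)) applied to the common vertex, or more simply the proper interval ordering applied suitably, yields $\lbrace i, m \rbrace \in E(G)$. The symmetric argument handles $\lbrace m, k+1 \rbrace$: if $j+1 \le m < k+1$ use the edge $\lbrace j+1? \rbrace$... more carefully, if $i < m \le j+1$ we need $\lbrace m, k+1\rbrace$, which follows by applying closedness to $\lbrace i, j+1 \rbrace$ — actually one uses $\lbrace j, k+1\rbrace$ and the chain $m \le j+1$; and if $j+1 < m \le k$ then $\lbrace j, m\rbrace \in E(G)$ as above and proper interval ordering on $\lbrace j, k+1\rbrace$ with $j < m < k+1$ is what gave us $\lbrace j, m\rbrace$, while $\lbrace m, k+1\rbrace$ follows from $j < m < k+1$ and the proper interval ordering. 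In every case both $\lbrace i, m\rbrace$ and $\lbrace m, k+1\rbrace$ lie in $E(G) \subseteq E(G')$, so Corollary \ref{closedcriteria} applies and $G'$ is closed.

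Finally, to conclude that $[G]$ is closed: $[G]$ is obtained from $G$ by a finite sequence of such single-edge additions $G = G_0, G_1, \ldots, G_t = [G]$, where each $G_{s+1}$ is obtained from the closed graph $G_s$ by adding one edge of the prescribed form. By the case analysis above (which only used that $G_s$ is closed with respect to the fixed labelling on $[n]$), each $G_{s+1}$ is closed with respect to the same labelling. By induction on $s$, $[G] = G_t$ is closed, which is what we wanted.

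The main obstacle I anticipate is bookkeeping in the case analysis — carefully checking, in each of the (at most) four combinations of where $m$ sits relative to $j+1$, that the proper interval ordering of $G$ is being invoked on a genuinely valid triple of indices (strict inequalities, endpoints being actual edges), and making sure the degenerate cases $m = j+1$, $m = k$, or $j = i+?$ don't break the chain of implications. There is no deep idea here beyond Corollary \ref{closedcriteria} plus the proper interval ordering; the content is entirely in organizing the elementary index chase cleanly.
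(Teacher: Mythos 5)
There is a genuine gap. The pivotal claim at the end of your case analysis --- ``in every case both $\lbrace i,m\rbrace$ and $\lbrace m,k+1\rbrace$ lie in $E(G)\subseteq E(G')$'' --- is false, and the step you offer in its support does not work. When $j+1<m\le k$ you try to get $\lbrace i,m\rbrace\in E(G)$ from $\lbrace i,j+1\rbrace,\lbrace j,m\rbrace\in E(G)$ by ``closedness applied to the common vertex,'' but these two edges share no vertex (since $i<j$ and $m>j+1$), so Theorem \ref{quadgb}(ii) says nothing, and the proper interval ordering does not produce $\lbrace i,m\rbrace$ either. Concretely: let $G$ on $[5]$ have edges $\lbrace 1,2\rbrace,\lbrace 1,3\rbrace,\lbrace 2,3\rbrace,\lbrace 2,4\rbrace,\lbrace 2,5\rbrace,\lbrace 3,4\rbrace,\lbrace 3,5\rbrace,\lbrace 4,5\rbrace$. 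This $G$ is closed with respect to the given labelling. Taking $\lbrace i,j+1\rbrace=\lbrace 1,3\rbrace$ and $\lbrace j,k+1\rbrace=\lbrace 2,5\rbrace$ (so $i=1$, $j=2$, $k=4$), the construction adds $\lbrace 1,5\rbrace$, yet the intermediate edge $\lbrace 1,4\rbrace$ is \emph{not} in $E(G)$. The same example shows that your single-edge-addition reading of the construction breaks the induction: $G\cup\lbrace 1,5\rbrace$ is not closed with respect to this labelling, so ``each $G_{s+1}$ is closed with respect to the same labelling'' fails.

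The repair is the observation you actually reached mid-proof and then discarded (``but wait --- that gives a conclusion about $[G]$, not $G'$''): the missing intermediate edges are not in $E(G)$, but they are among the edges \emph{added in the same step}, because $\lbrace i,j+1\rbrace$ together with $\lbrace j,m\rbrace=\lbrace j,(m-1)+1\rbrace$, $j<m-1$, is itself a configuration that triggers the addition of $\lbrace i,m\rbrace$ (and symmetrically $\lbrace m,j+1\rbrace$ with $\lbrace j,k+1\rbrace$ triggers $\lbrace m,k+1\rbrace$ for $i<m<j$). In the example, $\lbrace 1,4\rbrace$ is added because $\lbrace 1,3\rbrace$ and $\lbrace 2,4\rbrace$ are edges of $G$. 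For this to give membership in $E(G')$ you must interpret one ``step'' as adding \emph{all} edges arising from such configurations simultaneously, which is how the paper's proof proceeds; the conclusion is then that the intermediate edges lie in $E(G')$, not in $E(G)$.
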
 

\begin{proof}
We prove that at the end of each step of adding all the necessary edges, 
the graph remains closed. Let us denote the graph by $G'$, after the first 
step. To prove $G'$ is closed, by Corollary \ref{closedcriteria}, it is enough to show that for each newly 
added edge $\lbrace i,k+1\rbrace$ in $G'$, the edges $\lbrace i,m\rbrace$ and $\lbrace m,k+1\rbrace$ 
belong to $G'$ for all $i<m<k+1$. Suppose that 
for the edges $\lbrace i,j+1\rbrace$, with $i < j$ and 
$\lbrace j, k + 1\rbrace$, with $j<k$, we add a new edge 
$\lbrace i, k + 1\rbrace$ to $G$. Now $\lbrace i,j+1\rbrace\in E(G)$ implies that $\lbrace i,i+1\rbrace,\lbrace i,i+2\rbrace,\ldots,\lbrace i,j\rbrace\in E(G)\subseteq E(G')$ and $\lbrace i+1,j+1\rbrace,\lbrace i+2,j+1\rbrace,\ldots,\lbrace j,j+1\rbrace\in E(G)\subseteq E(G')$, as $G$ is closed. Again 
$\lbrace j,k+1\rbrace\in E(G)$ implies that $\lbrace j,j+1\rbrace,\lbrace j,j+2\rbrace,\ldots,\lbrace j,k\rbrace\in E(G)\subseteq E(G')$ and $\lbrace j+1,k+1\rbrace,\lbrace j+2,k+1\rbrace,\ldots,\lbrace k,k+1\rbrace\in E(G)\subseteq E(G')$, as $G$ is closed. Now $\lbrace i,j+1\rbrace\in E(G)$ and $\lbrace j,j+2\rbrace\in E(G)$ imply that 
$\lbrace i,j+2\rbrace\in E(G')$, by our construction of $G'$. Similarly $\lbrace i,j+3\rbrace,\lbrace i,j+4\rbrace,\ldots,\lbrace i,k\rbrace\in E(G')$. So we have
\begin{equation}
\lbrace i,m\rbrace\in E(G')\quad \forall \, i<m<k+1.
\end{equation}
We already have $\lbrace j,k+1\rbrace,\lbrace j+1,k+1\rbrace,\lbrace j+2,k+1\rbrace,\ldots,\lbrace k,k+1\rbrace\in E(G)\subseteq E(G')$, from the closed property of $G$. 
If $i+1<j$, then $\lbrace i+1,j+1\rbrace\in E(G)$ and $\lbrace j,k+1\rbrace\in E(G)$ imply that $\lbrace i+1,k+1\rbrace\in E(G')$, by our construction of $G'$. Similarly $\lbrace i+2,k+1\rbrace,\lbrace i+3,k+1\rbrace,\ldots,\lbrace j-1,k+1\rbrace\in E(G')$. So we have
\begin{equation}
\lbrace m,k+1\rbrace\in E(G^{\prime})\quad \forall \, i<m<k+1.
\end{equation}
Therefore, by (2.1) and (2.2), for each newly added edge $\lbrace i,k+1\rbrace$ in $G'$, the edges $\lbrace i,m\rbrace$ and $\lbrace m,k+1\rbrace$ belong to $G'$, for all $i<m<k+1$. Hence $G^{\prime}$ is closed. Similarly, at each step of construction we 
get closed graph and therefore $[G]$ is closed.
\end{proof}

\begin{corollary}\label{closedCMconstruct}
Starting with an arbitrary connected graph $G$ and a labelling of $V(G)$, we can construct a $\mathbf{CCM}$-completion $[G]$ of $G$ by adding some suitable edges.
\end{corollary}

\begin{proof} 
Follows from Proposition \ref{closed} and Proposition \ref{closedCM}.
\end{proof}

\begin{example}{\rm
Let us illustrate the construction described above through an example. 
\begin{center}
\begin{tikzpicture}
  [scale=.5,auto=left,every node/.style={circle,scale=0.6}]
  \node[draw] (n1) at (0,0) {1};
  \node[draw] (n2) at (3,3)  {2};
  \node[draw] (n3) at (3,-3)  {3};
  \node[draw] (n4) at (6,0) {4};
   \node[draw] (n5) at (9,-3) {5};
  \node[draw] (n6) at (12,0)  {6};
  \node[draw] (n7) at (9,3)  {7};

  \foreach \from/\to in {n1/n2,n2/n4,n3/n4,n1/n3,n4/n5,n5/n6,n6/n7,n4/n6}
    \draw (\from) -- (\to);
    
    \node[fill=white, scale=2] at (6,-3) {$G$};
\end{tikzpicture}
\end{center}

\noindent The graph $G$ is not closed with respect to any labeling. In our choice of labelling, it is not closed because of $\lbrace 1,2\rbrace,\lbrace 1,3\rbrace\in E(G)$ but $\lbrace 2,3\rbrace\not\in E(G)$. So add $\lbrace 2,3\rbrace$ to make it closed and call the new graph $G^{\prime}$.
\medskip

\begin{center}
\begin{tikzpicture}
[scale=.5,auto=left,every node/.style={circle,scale=0.6}]
  \node[draw] (n1) at (0,0) {1};
  \node[draw] (n2) at (3,3)  {2};
  \node[draw] (n3) at (3,-3)  {3};
  \node[draw] (n4) at (6,0) {4};
   \node[draw] (n5) at (9,-3) {5};
  \node[draw] (n6) at (12,0)  {6};
  \node[draw] (n7) at (9,3)  {7};

  \foreach \from/\to in {n1/n2,n2/n4,n3/n4,n1/n3,n4/n5,n5/n6,n6/n7,n2/n3,n4/n6}
    \draw (\from) -- (\to);
    
    \node[fill=white, scale=2] at (6,-3) {$G^{\prime}$};
\end{tikzpicture}
\end{center}

\noindent Now $J_{G^{\prime}}$ is not Cohen-Macaulay as $\lbrace 1,3\rbrace,\lbrace 2,4\rbrace\in E(G)$ but $\lbrace 1,4\rbrace\not\in E(G)$. Add $\lbrace 1,4\rbrace$ 
and call the new graph $[G]$.
\medskip

\begin{center}
\begin{tikzpicture}
  [scale=.5,auto=left,every node/.style={circle,scale=0.6}]
  \node[draw] (n1) at (0,0) {1};
  \node[draw] (n2) at (3,3)  {2};
  \node[draw] (n3) at (3,-3)  {3};
  \node[draw] (n4) at (6,0) {4};
   \node[draw] (n5) at (9,-3) {5};
  \node[draw] (n6) at (12,0)  {6};
  \node[draw] (n7) at (9,3)  {7};

  \foreach \from/\to in {n1/n2,n2/n4,n3/n4,n1/n3,n4/n5,n5/n6,n6/n7,n2/n3,n4/n6,n1/n4}
    \draw (\from) -- (\to);
    
    \node[fill=white, scale=2] at (6,-3) {$[G]$};
\end{tikzpicture}
\end{center}

\noindent $[G]$ is closed and $J_{[G]}$ is Cohen-Macaulay by Proposition \ref{closedCM}.
}
\end{example}
\medskip

The construction described above depends on the labelling of the vertices, and to find 
the $\mathbf{CCM}$-completion number $\mathbf{CCM}(G)$ of a graph $G$, we have to check 
$n!$ cases, which cannot be executed in polynomial time. Next, we give the explicit structure 
of a closed Cohen-Macaulay binomial edge ideals in Theorem \ref{closedcmstruc}, which helps  
us identify the closed Cohen-Macaulay binomial edge ideals irrespective of labelling. 
Let $\mathbf{BT}$ denotes the class of those graphs whose block graphs are trees, i.e., 
$\mathbf{BT}=\{G\mid \mathcal{B}(G)\,\,\text{is\,\,a\,\,tree}\}$. For any graph 
$G\in \mathbf{BT}$, we give an algorithm (Algorithm \ref{algclosedcm}) to find the 
$\mathbf{CCM}$-completion number $\mathbf{CCM}(G)$ in polynomial time.
\medskip

\begin{proposition}\label{closedpath}
The block graph $\mathcal{B}(G)$ of a connected closed graph $G$ is a path graph.
\end{proposition}

\begin{proof}
Let $B_{l},B_{m},B_{n}$ be three blocks of $G$ such that $B_{l}\cap B_{m}\cap B_{n}=\{v\}$. 
There exist $u_{t}\in V(B_{t})$, such that $\{v,u_{t}\}\in E(G)$ for $t\in \{l,m,n\}$. Note that $G[T]$ 
is an induced claw for $T=\{v,u_{l},u_{m},u_{n}\}$ and by \cite[Proposition 7.4]{hho}, we get a contradiction 
as $G$ is closed. Therefore, no three blocks of $G$ can have a common vertex. Let $B$ be a block of $G$ 
containing more than two cut vertices of $G$. Consider a labelling of $G$ with respect to which $G$ is 
closed and by Remark \ref{pio}, $G$ admits a proper interval ordering with respect to this labelling. 
Let $i<j<k$ be three cut vertices of $G$ chosen arbitrarily and belonging to $V(B)$. Let $B_{i},B_{j},B_{k}$ 
be three blocks other than $B$ containing $i,j,k$, respectively. By \cite[Proposition 2.1]{closed1}, 
there exists a directed path from $i$ to $j$. Since $i<j$, there exists a vertex 
$j^{\prime}\in \mathcal{N}_{B}(j)$ such that $j^{\prime}<j$. By proper interval ordering of 
$G$, we have $\{j-1,j\}\in E(B)$ and $\{j,j+1\}\in E(B_{j})$. Therefore, no vertex of $\mathcal{N}_{B}(j)$ 
can be greater than $j$, otherwise $j+1$ will be adjacent to a 
vertex in $V(B)\setminus\{j\}$, which is not possible. Thus, there can not be any directed path from $j$ to $k$, 
as $j<k$, and this gives a contradiction due to \cite[Proposition 2.1]{closed1}. So, every block of $G$ contains 
at most two cut vertices of $G$. Moreover, $G$ being connected, there are exactly two blocks of $G$ each of which 
contains only one cut vertex of $G$. Hence the block graph $\mathcal{B}(G)$ of $G$ is a path graph.
\end{proof}

Let $G$ be a closed graph. Then by Proposition \ref{closedpath}, $G$ can be written as
\begin{align}\label{closeddecom}
G=B_{1}\cup\cdots\cup B_{r},
\end{align}
where $B_{1},\ldots, B_{r}$ are blocks of $G$ with the following property: For $i<j$ and $i\in\{1,\ldots,r-1\}$, 
we have $V(B_{i})\cap V(B_{j})\neq \phi$ if and only if $j=i+1$.

\begin{proposition}\label{closedindecom}
An indecomposable closed graph has no cut vertex. 
\end{proposition}

\begin{proof} Let $G$ be a closed graph with a cut vertex $v$. We show that $G$ is decomposable. 
By Proposition \ref{closedpath}, $v$ belongs to exactly two blocks of $G$, say $B_{1}$ and $B_{2}$. 
If $\mathcal{N}_{B_{i}}(v)$ is singleton for $i=1,2$, then we are done. Let $x,y\in \mathcal{N}_{B_{1}}(v)$ 
and $z\in \mathcal{N}_{B_{2}}(v)$ be any vertex. Suppose $\{x,y\}\not\in E(G)$. Then the induced subgraph 
$G[\{x,y,z\}]$ is a claw and this gives a contradiction by \cite[Theorem 7.10]{hho} to the fact that $G$ 
is closed. Thus, $\mathcal{N}_{B_{1}}(v)$ is a clique and similarly, $\mathcal{N}_{B_{2}}(v)$ is a clique. 
Hence $G$ is decomposable.

\end{proof}

\begin{theorem}\label{closedcmstruc} A graph $G$ is closed and $J_{G}$ is 
Cohen-Macaulay (equivalently, unmixed) if and only if every block of $G$ 
is complete and $\mathcal{B}(G)$ is a path graph.
\end{theorem}

\begin{proof}
Let $G$ be closed and $J_{G}$ be Cohen-Macaulay. Consider a labelling of $G$ with respect to 
which $G$ is closed. By Proposition \ref{closedpath}, $\mathcal{B}(G)$ is a path. Let $B$ be a block of $G$ 
containing two cut vertices $i$ and $j$ of $G$ with $i<j$. Let $B_{i}$ and $B_{j}$ be two blocks other than 
$B$ containing $i$ and $j$ respectively. Since $J_{G}$ is Cohen-Macaulay, $G$ is accessible by 
\cite[Theorem 3.5]{bms1}. Therefore, by \cite[Proposition 4.10]{bms1}, we have $\{i,j\}\in E(B)$. 
Since $i<j$ and $\{i,j\}\in E(B)$, proper interval ordering of $G$ implies 
$\{i,i+1\}, \{j-1,j\}\in E(B)$, $\{i-1,i\}\in E(B_{i})$ and $\{j,j+1\}\in E(B_{j})$. Thus, for 
any vertex $v\in V(B)$ we have $i\leq v\leq j$. Again, using proper interval ordering, we can 
say $B$ is complete as $\{i,j\}\in E(B)$. Now consider a block $B^{\prime}$ of $G$ containing 
only one cut vertex $k$ of $G$. By proper interval ordering of $G$, all vertices in 
$\mathcal{N}_{B^{\prime}}(k)$ are either less than $k$ or greater than $k$. Since $G$ is 
accessible, by \cite[Theorem 1.2]{bms1}, we have $\mathcal{N}_{B^{\prime}}[k]=V(B^{\prime})$. 
Hence, it follows from the proper interval property of $G$ that $B^{\prime}$ is complete. 
Conversely, if every block of $G$ is complete and $\mathcal{B}(G)$ is a path, then $G$ is 
closed by \cite[Theorem 7.10]{hho} and $J_{G}$ is Cohen-Macaulay by \cite[Theorem 1.1]{ehh}.
\end{proof}

\begin{remark}\label{closedcmremark}{\rm 
Let $G$ be a closed graph such that $J_{G}$ is Cohen-Macaulay. Then choosing a closed labelling of $G$, we can write $G$ as
$$ G=K_{m_{1}}\cup\cdots\cup K_{m_{r}},$$
where $V(K_{m_{i}})=\{m_{1}+\cdots+m_{i-1}-i+1+k \mid 1\leq k\leq m_{i}\} $ for $i=2,\ldots,r$ and $V(K_{m_{1}})=\{1,\ldots,m_{1}\}$.}
\end{remark}

Let $G$ be a simple graph. We denote the \textit{Castelnuovo-Mumford regularity} of $R/J_{G}$ 
by $\mathrm{reg}(R/J_{G})$, the \textit{Hilbert series} of $R/J_{G}$ by $\mathrm{Hilb}_{R/J_{G}}(t)$, 
the $i$'th \textit{Betti number} by $\beta_{i}(R/J_{G})$ and the $(i,j)$-th \textit{graded Betti number} 
by $\beta_{i,j}(R/J_{G})$.

\begin{remark}\label{remarkclosedcm}{\rm 
The following are some properties of closed graphs with Cohen-Macaulay binomial 
edge ideals. Let $G=K_{m_{1}}\cup\cdots\cup K_{m_{r}}$ be any closed graph such that $J_{G}$ is 
Cohen-Macaulay, as in Remark \ref{closedcmremark}, 
and $R_{i}=K[\{x_{j},y_{j}\mid j\in V(K_{m_{i}})\}]$ for $1\leq i\leq r$.
 
\begin{enumerate}[(1)]
\item By \cite[Theorem 1.1]{mm13}, we have $\mathrm{reg}(R/J_{H})\geq d(H)$, 
where $d(H)$ is the diameter of $H$, i.e., the length of the longest induced path in $H$. 
By \cite[Theorem 3.2]{km12}, we have $\mathrm{reg}(R/J_{H})\leq \mathrm{clq}(H)$ for any 
closed graph $H$, where $\mathrm{clq}(H)$ denotes the number of maximal cliques of $H$. Now, Theorem \ref{closedcmstruc} implies $d(G)=\mathrm{clq}(G)$ and so, we get
$$\mathrm{reg}(R/J_{G})=d(G)=\mathrm{clq}(G)=r.$$

\item Using \cite[Lemma 2.9]{zz00} and \cite[Proposition 3]{hr18}, we have 
\begin{align*}
\beta_{q}(R/J_{G})&= \sum_{i_{1}+\cdots+i_{r}=q}\beta_{i_{1}}(R_{1}/J_{K_{m_{1}}})\cdots \beta_{i_{r}}(R_{r}/J_{K_{m_{r}}})\\
&= \sum_{i_{1}+\cdots+i_{r}=q} i_{1}\binom{m_{1}}{i_{1}+1}\cdots i_{r}\binom{m_{r}}{i_{r}+1},
\end{align*}
 where $1\leq q\leq n-1$ and $\beta_{0}(R/J_{G})=1$.
 
\item From \cite[Corollary 1]{ch94}, we get $\mathrm{Hilb}_{R_{i}/J_{K_{m_{i}}}}(t)=\dfrac{1+(m_{i}-1)t}{(1-t)^{m_{i}+1}}$, for $1\leq i\leq r$. Hence by \cite[Corollary 3.3]{ks19}, we have
\begin{align*}
\mathrm{Hilb}_{R/J_{G}}(t)&=(1-t)^{2r-2}\prod_{i\in [r]} \mathrm{Hilb}_{R_{i}/J_{K_{m_{i}}}}(t)\\
&=(1-t)^{2r-2} \prod_{i\in [r]} \dfrac{1+(m_{i}-1)t}{(1-t)^{m_{i}+1}}\\
&=\dfrac{\prod_{i\in [r]} (1+(m_{i}-1)t)}{(1-t)^{n+1}}.
\end{align*}

\item By \cite[Corollary 7]{hr18}, $\beta_{n-1, n+2r-2}(R/J_{G})$ is an extremal Betti number of $R/J_{G}$ and $\beta_{n-1, n+2r-2}(R/J_{G})=\prod_{i\in [r]} (m_{i}-1)$.
\end{enumerate}
}
\end{remark}

Let $T$ be a tree. There exists one and only one path between any two vertices of $T$. 
Consider a path $u=u_{0}, u_{1},\ldots, u_{n}=v$ in $T$ between $u$ and $v$. Let $C^{uv}_{0}$ and $C^{uv}_{n}$ 
denote the connected components of $T\setminus\{u_{1}\}$ containing $u_{0}$ and $T\setminus\{u_{n-1}\}$ 
containing $u_{n}$, respectively. Let $C^{uv}_{i}$ denote the connected component of 
$T\setminus\{u_{i-1},u_{i+1}\}$ containing the vertex $u_{i}$ for $i=\{1,\ldots,n-1\}$. 
Then $V(C^{uv}_{0}),\ldots, V(C^{uv}_{n})$ creates a partition on $V(T)$.
\medskip
 
Let $G$ be a connected graph such that no three blocks of $G$ share a common vertex, i.e., 
$\mathcal{B}(G)$ is a tree. Let $B_{i}$ and $B_{j}$ be two blocks of $G$. Then $B_{i}$ and $B_{j}$ are two 
vertices in $\mathcal{B}(G)$. Since $\mathcal{B}(G)$ is a tree, there exists a unique path between $B_{i}$ 
and $B_{j}$ in $\mathcal{B}(G)$. Let $B_{i}=B^{ij}_{0}, B^{ij}_{1},\ldots, B^{ij}_{s-1}, B^{ij}_{s}=B_{j}$ 
be the unique path between $B_{i}$ and $B_{j}$ in $\mathcal{B}(G)$. Then $V(C^{ij}_{0}),\ldots, V(C^{ij}_{s})$ 
makes a partition on $V(\mathcal{B}(G))$. Now consider $D^{ij}_{k}=\cup_{B\in V(C^{ij}_{k})} V(B)$, as a 
subset of $V(G)$ for $k=\{0,\ldots,s\}$. Note that $D^{ij}_{0}=V(B_{i})$ and $D^{ij}_{s}=V(B_{j})$ as $i,j$ 
are pendant vertices of $G$. Let 
$$ [G]_{ij}= K_{\vert D^{ij}_{0}\vert}\cup\cdots\cup K_{\vert D^{ij}_{s}\vert},$$
where $V(K_{\vert D^{ij}_{k}\vert})= D^{ij}_{k}$ for $k=0,\ldots,s$. Then $[G]_{ij}$ is a closed graph such that $J_{[G]_{ij}}$ is Cohen-Macaulay and $J_{G}\subseteq J_{[G]_{ij}}$. We write 
$$d_{ij}=\mathbf{CCM}_{[G]_{ij}}(G)=\vert E([G]_{ij})\vert- \vert E(G)\vert.$$
Note that $[G]_{ij}=[G]_{ji}$, and $\vert E([G]_{ij})\vert =\sum_{k=0}^{s}\binom{\vert D^{ij}_{k}\vert}{2}$.

\begin{theorem}\label{smallclosedcm}
Let $G\in \mathbf{BT}$ be a graph. The set of all minimal completion of 
$G$ is $\{[G]_{ij}\mid B_{i},B_{j} \,\,\text{pendant\,\, vertices\,\, of}\,\, \mathcal{B}(G)\}$. 
Moreover,
$$\mathbf{CCM}(G)=\mathrm{min}\{d_{ij}\mid B_{i},B_{j} \,\,\text{pendant\,\, vertices\,\, of}\,\, \mathcal{B}(G)\},$$
and if $\mathbf{CCM}(G)=d_{pq}$, then $[G]_{pq}$ is a minimum $\mathbf{CCM}$-completion of $G$.
\end{theorem}

\begin{proof} By Theorem \ref{closedcmstruc}, any closed graph with Cohen-Macaulay binomial edge ideal is 
such that every block is complete and block graph is a path as mentioned in Remark \ref{closedcmremark}. 
Suppose $[G]$ is a closed graph with Cohen-Macaulay binomial edge ideal such that $V([G])=V(G)$ and 
$E(G)\subseteq E([G])$. Then by Theorem \ref{closedcmstruc}, we can write
 $$[G]=K_{m_{1}}\cup\cdots\cup K_{m_{r}},$$
where for $i<j$, $V(K_{m_{i}})\cap V(K_{m_{j}})=\phi$ if $j\neq i+1$ and $V(K_{m_{i}})\cap V(K_{m_{i+1}})$ 
is a singleton set containing a cut vertex. Note that there is a block $B_{m_{i}}$ of $G$ such that 
$V(B_{m_{i}})\subseteq V(K_{m_{i}})$ and $B_{m_{i}}$ contains exactly one cut vertex of $G$ for $i\in\{1,r\}$. 
Now consider the graph $[G]_{m_{1}m_{r}}$. Then $E([G]_{m_{1}m_{r}})\subseteq E([G])$ is clear. Also, 
observe that $[G]_{ij}$ is a minimal $\mathbf{CCM}$-completion of $G$ for every pair of pendant vertices 
$\{B_{i},B_{j}\}$ of $\mathcal{B}(G)$.  Therefore, $\{[G]_{ij}\mid B_{i},B_{j} \,\,\text{pendant\,\, vertices\,\, of}\,\, \mathcal{B}(G)\}$ is the collection of all minimal $\mathbf{CCM}$-completion of $G$. Since any 
minimum $\mathbf{CCM}$-completion is a minimal $\mathbf{CCM}$-Completion, we have
 $$\mathbf{CCM}(G)=\mathrm{min}\{d_{ij}\mid B_{i},B_{j} \,\,\text{pendant\,\, vertices\,\, of}\,\, \mathcal{B}(G)\},$$
and if $\mathbf{CCM}(G)=d_{pq}$, then $[G]_{pq}$ is a minimum $\mathbf{CCM}$-completion of $G$.
\end{proof}

\begin{corollary}
Let $G$ be a closed graph. Then the $\mathbf{CCM}$-completion number of $G$ is given by
$$\mathbf{CCM}(G)=\sum_{B\,\, \text{block\,\, of}\,\, G} \binom {\vert V(B)\vert}{2} - \vert E(G)\vert. $$
\end{corollary}

\begin{proof}
By Proposition \ref{closedpath}, $\mathcal{B}(G)$ is a path graph. So, $G$ can be decomposed as in \ref{closeddecom}. There are only two pendant vertices of $\mathcal{B}(G)$, say $B_{p}$ and $B_{q}$. Thus, by Theorem \ref{smallclosedcm}, the only minimal $\mathbf{CCM}$-completion of $G$ will be $[G]_{pq}$, and so, it is the only minimum $\mathbf{CCM}$-completion. Hence, $\mathbf{CCM}(G)=d_{pq}$. Since complete graph on the vertex set $V(B)$ of a block $B$ of $G$ contains $\binom {\vert V(B)\vert}{2}$ edges, the result follows easily.
\end{proof}

We give the following algorithm to find the $\mathbf{CCM}$-completion number $\mathbf{CCM}(G)$ and a minimum $\mathbf{CCM}$-completion of $G$, for any graph $G\in \mathbf{BT}$.

\begin{algorithm}\label{algclosedcm}{\rm
\textbf{Input:} A graph $G$ such that $\mathcal{B}(G)$ is a tree.
\medskip

\textbf{Output:} $\mathbf{CCM}(G)=d$ and a minimum $\mathbf{CCM}$-completion $[G]_{pq}$ of $G$.
\medskip

\begin{enumerate}[Step-1:]
\item Find the block graph $\mathcal{B}(G)$ of $G$.
\medskip

\item Collect all the pendant vertices of $\mathcal{B}(G)$ and let $\mathcal{P}=\{B_{1},\ldots, B_{r}\}$ be 
the set of all pendant vertices of $\mathcal{B}(G)$.
\medskip

\item Set $d=n^2, p=0,q=0$.
\medskip

\item Set $T=\{(B_{i},B_{j})\mid B_{i},B_{j}\in \mathcal{P}\,\, \text{with}\,\, i<j\}$ i.e., $T$ is the collection of all ordered pairs $(B_{i},B_{j})$ such that $B_{i},B_{j}$ are pendant vertices of $\mathcal{B}(G)$ with $i<j$.
\medskip

\item Take an element $(B_{i},B_{j})\in T$.
\medskip

\item Find the unique path from $B_{i}$ to $B_{j}$ in $\mathcal{B}(G)$ and suppose the path is $B_{i}=B^{ij}_{0}, \ldots, B^{ij}_{s_{ij}}=B_{j}$.
\medskip

\item Set $C^{ij}_{0}=$ the connected component of $\mathcal{B}(G)\setminus\{B^{ij}_{1}\}$ containing $B^{ij}_{0}$ i.e., $V(C^{ij}_{0})$ is the collection of those vertices of $\mathcal{B}(G)$ which are connected to $B^{ij}_{0}$ via a path in $\mathcal{B}(G)\setminus\{B^{ij}_{1}\}$. Similarly, we set $C^{ij}_{s_{ij}}=$ the connected component of $\mathcal{B}(G)\setminus\{B^{ij}_{s_{ij}-1}\}$ containing $B^{ij}_{s_{ij}}$, and for $1\leq k\leq s_{ij}-1$,
$C^{ij}_{k}=$ the connected component of $\mathcal{B}(G)\setminus\{B^{ij}_{k-1}, B^{ij}_{k+1}\}$ containing $B^{ij}_{k}$.
\medskip

\item For each $k\in\{0,\ldots,s_{ij}\}$, consider
$$D^{ij}_{k}=\bigcup_{B\in V(C^{ij}_{k})} V(B)$$ as a subset of $V(G)$.
\medskip

\item Compute $$d_{ij}=\sum_{k=0}^{s_{ij}}\binom{\vert D^{ij}_{k}\vert}{2}- \vert E(G)\vert.$$
\medskip

\item If $d_{ij}<d$, then set $d=d_{ij}, p=i, q=j$.
\medskip

\item Update $T=T\setminus\{(B_{i},B_{j})\}$.
\medskip

\item If $T\neq \phi$, then go to Step-5, else go to Step-13.
\medskip

\item Construct the graph $[G]_{pq}$ with 
$$V([G]_{pq})=V(G),$$
 and 
 $$E([G]_{pq})=\bigcup_{k=0}^{s_{pq}} \{\{u,v\}\mid u,v\in D^{pq}_{k}, u\neq v\}.$$
 
\item Output $d$ and $[G]_{pq}$.

\end{enumerate}
}
\end{algorithm}
\medskip

\noindent\textbf{Correctness:} The correctness of the algorithm follows from Theorem \ref{smallclosedcm}.\medskip

\noindent\textbf{Time complexity:} Let the given graph $G$ has $V(G)=[n]$. 
Then by the algorithm given in \cite{paton}, we can compute $\mathcal{B}(G)$ in $\mathcal{O}(n^2)$ time. 
Now, checking whether a vertex of $\mathcal{B}(G)$ is a pendant vertex or not takes 
$\mathcal{O}(\vert V(\mathcal{B}(G))\vert ^{2})$ time. Therefore, all the pendant vertices of 
$\mathcal{B}(G)$ can be found in Step-2 by $\mathcal{O}(n^2)$ time (because 
$\vert V(\mathcal{B}(G))\vert\leq n$). Step-3 requires constant time. Since the number of pendant vertices 
of $\mathcal{B}(G)$ is $r$ and $\binom{r}{2}<n^2$, Step-4 can be executed in $\mathcal{O}(n^2)$ time. 
Step-5 takes $\mathcal{O}(1)$ time. We can find path between two vertices in $\mathcal{B}(G)$ using either the 
BFS or the DFS method (see \cite{bfsdfs}) in $\mathcal{O}(\vert V(\mathcal{B}(G))\vert+ \vert E(\mathcal{B}(G))\vert)$ 
time and thus, $\mathcal{O}(n^2)$ time is sufficient for Step-6. Since $\vert V(\mathcal{B}(G))\vert \leq n$, 
we can find any induced subgraph of $\mathcal{B}(G)$ in $\mathcal{O}(n^2)$ time. In any induced subgraph 
of $\mathcal{B}(G)$, we can find the connected component containing one fixed vertex in $\mathcal{O}(n^2)$ 
time using the DFS. Now in Step-7, we have to do this process $s_{ij}+1$ times. Therefore, Step-7 requires 
$\mathcal{O}(n^3)$ time as $s_{ij}+1\leq n$. Clearly, we have 
$\vert V(C^{ij}_{k})\vert\leq \vert V(\mathcal{B}(G))\vert\leq n$, and so, we can find $D^{ij}_{k}$ in 
$\mathcal{O}(n)$ time for each $k\in \{0,\ldots,s_{ij}\}$. Since $s_{ij}+1\leq n$, total time required 
to process Step-8 is $\mathcal{O}(n^2)$. In Step-9, we are taking sum of $s_{ij}+1$ terms and so it will 
take $\mathcal{O}(n)$ time (as $s_{ij}+1\leq n$). Step-10 and Step-11 are $\mathcal{O}(1)$ processes. 
Checking $T=\phi$ or $T\neq \phi$ takes constant time. We repeat Step-5 to Step-12 $\binom{r}{2}$ times. 
Since one iteration of Step-5 to Step-12 can be done in $\mathcal{O}(n^3)$ time, $\binom{r}{2}$ iteration 
of Step-5 to Step-12 will take $\mathcal{O}(n^5)$ time (because $\binom{r}{2}\leq n^2$). We can choose two 
vertices out of $n$ in $\binom{n}{2}$ ways. Now for each pair of vertices, we have to check whether it belongs 
to the same $D^{pq}_{k}$ or not for some $k\in\{0,\ldots, s_{pq}\}$. There are $s_{pq}+1$ choices of $k$. Since 
$\binom{n}{2}< n^2$ and $s_{pq}+1\leq n$, total time required in Step-13 is $\mathcal{O}(n^3)$. At last, 
Step-14 takes $\mathcal{O}(n^2)$ time as $\vert V([G]_{pq})\vert=\vert V(G)\vert=n$ and $\vert E([G]_{pq})\vert\leq n^2$. 
Since the entire algorithm can be performed in $\mathcal{O}(n^5)$ time, it is a polynomial time algorithm.
\medskip

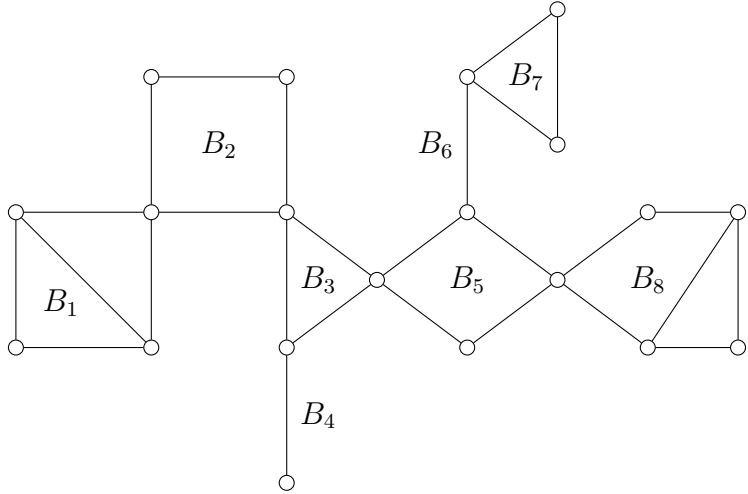
\begin{figure}[H]
	\centering
	\begin{tikzpicture}
  [scale=.6,auto=left,every node/.style={circle,scale=0.5}]
    
  \node[draw] (n1) at (0,0)  {};
  \node[draw] (n2) at (3,0)  {};
  \node[draw] (n3) at (0,3) {};
   \node[draw](n4) at (3,3) {};
  \node[draw] (n5) at (3,6) {};
  \node[draw] (n6) at (6,6) {};
  \node[draw] (n7) at (6,3) {};
  \node[draw] (n8) at (6,0) {};
  \node[draw] (n9) at (6,-3) {};
  \node[draw] (n10) at (8,1.5) {};
  
  \node[draw] (n11) at (10,3)  {};
  \node[draw] (n12) at (12,1.5)  {};
  \node[draw] (n13) at (10,0) {};
   \node[draw](n14) at (14,0) {};
  \node[draw] (n15) at (16,0) {};
  \node[draw] (n16) at (16,3) {};
  \node[draw] (n17) at (14,3) {};
  \node[draw] (n18) at (10,6) {};
  \node[draw] (n19) at (12,7.5) {};
  \node[draw] (n20) at (12,4.5) {};
  
  \node[scale=2] (m1) at (1,1) {$B_{1}$};
  \node[scale=2] (m2) at (4.5,4.5)  {$B_{2}$};
  \node[scale=2] (m3) at (6.7,1.5) {$B_{3}$};
   \node[scale=2](m4) at (6.7,-1.5) {$B_{4}$};
  \node[scale=2] (m5) at (10,1.5) {$B_{5}$};
  \node[scale=2] (m6) at (9.3,4.5) {$B_{6}$};
  \node[scale=2] (m7) at (11.3,6) {$B_{7}$};
  \node[scale=2] (m8) at (14,1.5) {$B_{8}$};

 \foreach \from/\to in {n1/n2,n1/n3, n2/n3, n2/n4, n3/n4, n4/n5, n5/n6, n6/n7, n4/n7, n7/n8, n8/n9, n8/n10, n7/n10, n10/n11, n11/n12, n12/n13, n10/n13, n12/n14, n14/n15, n15/n16, n16/n17, n12/n17, n14/n16, n11/n18, n18/n19, n19/n20, n18/n20}
    \draw[] (\from) -- (\to);
   
\end{tikzpicture}
	\caption{A graph $G$ such that $\mathcal{B}(G)$ is a tree}\label{figalg}
\end{figure}

\begin{example}{\rm
Consider the graph $G$ in the \Cref{figalg}. It is clear from the graph that $\mathcal{B}(G)$ is a tree. 
Now $\mathcal{P}=\{B_{1}, B_{4}, B_{7}, B_{8}\}$ is the set of blocks of $G$ containing only one cut 
vertex of $G$, i.e., $\mathcal{P}$ is the set of all pendant vertices of $\mathcal{B}(G)$. Consider 
$T$ as in Step-4 of the algorithm. Then
$$T=\{(B_{1}, B_{4}), (B_{1}, B_{7}), (B_{1},B_{8}), (B_{4},B_{7}), (B_{4},B_{8}), (B_{7}, B_{8})\}.$$

\noindent We have $\vert E(G)\vert =27$ and can observe that 
\medskip

\begin{enumerate}[$\bullet$]

\item $[G]_{14}=K_{4}\cup K_{4}\cup K_{13}\cup K_{2}$ and $d_{14}= \binom{4}{2}+\binom{4}{2}+\binom{13}{2}+\binom{2}{2}-27=64$;\medskip

\item $[G]_{17}=K_{4}\cup K_{4}\cup K_{4}\cup K_{8}\cup K_{2}\cup K_{3}$ and $d_{17}= \binom{4}{2}+\binom{4}{2}+\binom{4}{2}+\binom{8}{2}+\binom{2}{2}+\binom{3}{2}-27=23$;\medskip

\item $[G]_{18}=K_{4}\cup K_{4}\cup K_{4}\cup K_{7}\cup K_{5}$ and $d_{18}= \binom{4}{2}+\binom{4}{2}+\binom{4}{2}+\binom{7}{2}+\binom{5}{2}-27=22$;\medskip

\item $[G]_{47}=K_{2}\cup K_{9}\cup K_{8}\cup K_{2}\cup K_{3}$ and $d_{47}= \binom{2}{2}+\binom{9}{2}+\binom{8}{2}+\binom{2}{2}+\binom{3}{2}-27=42$;\medskip

\item $[G]_{48}=K_{2}\cup K_{9}\cup K_{7}\cup K_{5}$ and $d_{48}= \binom{2}{2}+\binom{9}{2}+\binom{7}{2}+\binom{5}{2}-27=48$;\medskip

\item $[G]_{78}=K_{3}\cup K_{2}\cup K_{13}\cup K_{5}$ and $d_{78}= \binom{3}{2}+\binom{2}{2}+\binom{13}{2}+\binom{5}{2}-27=65$.
\end{enumerate}
\medskip

\noindent Therefore, $\mathbf{CCM}(G)=d_{18}=22$, i.e., the minimum number of edges needed to add to 
$G$ to make it a closed graph with Cohen-Macaulay binomial edge ideal is $22$, and $[G]_{18}$ is the 
only minimum $\mathbf{CCM}$-completion of $G$.
}
\end{example}

$\mathbf{PIG}$-completion problem is NP-hard, but for $G\in\mathbf{BT}$, the $\mathbf{CCM}$-completion problem 
can be performed in polynomial time. Since $\mathbf{CCM}\subseteq \mathbf{PIG}$, the following question arise 
naturally.

\begin{question}\label{quesccm-comp}{\rm 
For an arbitrary graph $G$, can we give a polynomial-time algorithm to compute $\mathbf{CCM}(G)$ and a minimum $\mathbf{CCM}$-completion of $G$?
}
\end{question}

\section{Unmixed and Cohen-Macaulay Property of Binomial Edge Ideals of Subgraphs}\label{ccmsubgraph}

In this section, we discuss when unmixed and Cohen-Macaulay properties of binomial edge ideals are hereditary for subgraphs. Also, we study the accessible completion of graphs and Cohen-Macaulay criterion of binomial edge ideals of whisker graphs.

\begin{proposition}\label{um4}
Let $G$ be a graph such that $J_{G}$ is unmixed. Let 
$v\in V(G)$ be a free vertex in $\Delta(G)$, and $F$ 
the facet of $\Delta(G)$ with $v\in F$. If 
$F\setminus\lbrace v\rbrace\not\subseteq T$ for each $T\in\mathscr{C}(G)$, with $\vert T\vert\neq 1$, then $J_{G\setminus\lbrace v\rbrace}$ is unmixed.
\end{proposition}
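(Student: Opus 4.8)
The plan is to verify, via Lemma \ref{um3}, that $c(T)=\vert T\vert+1$ for every $T\in\mathscr{C}(G\setminus\{v\})$, where $c(T)$ denotes the number of connected components of the induced subgraph $(G\setminus\{v\})_{\overline{T}}$ with $\overline{T}=V(G\setminus\{v\})\setminus T$. Before starting I would record the standard fact that, since $v$ is a free vertex of $\Delta(G)$, its unique facet $F$ satisfies $N_{G}(v)=F\setminus\{v\}$; in particular $N_G(v)$ is a clique, so $v$ is not a cut vertex of $G$ and $G\setminus\{v\}$ is connected. Fix $T\in\mathscr{C}(G\setminus\{v\})$; note $v\notin T$ automatically.

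The core of the proof is the claim that $F\setminus\{v\}\not\subseteq T$. Granting this, Lemma \ref{um2} applies (its condition (ii) holds: $v\notin T$ and $T\in\mathscr{C}(G\setminus\{v\})$), so $T\in\mathscr{C}(G)$; since $J_G$ is unmixed, Lemma \ref{um3} gives $c_G(T)=\vert T\vert+1$, where $c_G(T)$ counts the components of $G_{V(G)\setminus T}$. To descend to $G\setminus\{v\}$, note that in $G_{V(G)\setminus T}$ the neighbours of $v$ are $(F\setminus\{v\})\cap(V(G)\setminus T)$, which is nonempty precisely because $F\setminus\{v\}\not\subseteq T$ and which is a clique; hence $v$ is neither isolated nor a cut vertex of $G_{V(G)\setminus T}$, so deleting $v$ does not change the component count and $c(T)=c_G(T)=\vert T\vert+1$, as wanted.

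It remains to prove the claim, and this is where the hypothesis on $F\setminus\{v\}$ enters; I expect this to be the main obstacle, since Lemma \ref{um2} is exactly unavailable in this range. Suppose for contradiction that $F\setminus\{v\}\subseteq T$. I would first show that $T\in\mathscr{C}(G)$ anyway: for each $i\in T$, put $W=V(G)\setminus(T\setminus\{i\})$, so that the condition ``$i$ is a cut point of $G_W$'' is what has to be checked; the neighbours of $v$ in $G_W$ equal $(F\setminus\{v\})\cap W=(F\setminus\{v\})\cap\{i\}$ (using $F\setminus\{v\}\subseteq T$), so $v$ is either isolated in $G_W$ or a pendant vertex attached to $i$, and in either case, since $i$ is already a cut point of $G_W\setminus\{v\}$ (because $T\in\mathscr{C}(G\setminus\{v\})$), adjoining $v$ keeps $i$ a cut point of $G_W$. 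Hence $T\in\mathscr{C}(G)$, and now the hypothesis forces $\vert T\vert=1$, say $T=\{w\}$ with $F\setminus\{v\}\subseteq\{w\}$. Excluding the trivial case $V(G)=\{v\}$, this means $F=\{v,w\}$, i.e. $w$ is the only neighbour of $v$ in $G$. Since $\{w\}\in\mathscr{C}(G)$ and $J_G$ is unmixed, Lemma \ref{um3} gives $c_G(\{w\})=2$, that is, $G\setminus\{w\}$ has exactly two components; as $v$ is a pendant at $w$, the vertex $v$ is a whole component, so $(G\setminus\{v\})\setminus\{w\}$ is connected, i.e. $w$ is not a cut point of $G\setminus\{v\}$ --- contradicting $\{w\}\in\mathscr{C}(G\setminus\{v\})$. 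This contradiction proves the claim and finishes the argument.

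Apart from the obstacle just flagged, the only real care needed is bookkeeping: keeping straight whether a given complement or induced subgraph is taken inside $V(G)$ or inside $V(G\setminus\{v\})$, and recording the elementary facts that adjoining an isolated or pendant vertex does not destroy an existing cut vertex and changes the number of connected components only in the obvious way. These are routine once the case split above is in place.
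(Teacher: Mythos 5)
Your proof is correct, but it reaches the key containment $\mathscr{C}(G\setminus\lbrace v\rbrace)\subseteq\mathscr{C}(G)$ by a genuinely different route than the paper. The paper proves this containment directly: if some $u\in T$ failed to be a cut point of $G_{\overline{T}\cup\lbrace u\rbrace}$, a witnessing path must pass through $v$, and since $N_G(v)=F\setminus\lbrace v\rbrace$ is a clique the path can be shortcut past $v$, contradicting the cut point property in $G\setminus\lbrace v\rbrace$; it then splits the component count into $\vert T\vert\neq 1$ (where the hypothesis $F\setminus\lbrace v\rbrace\not\subseteq T$ guarantees the component of $v$ does not vanish when $v$ is deleted) and $\vert T\vert=1$ (where it counts components directly, so the hypothesis is not needed). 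You instead first prove the stronger statement that $F\setminus\lbrace v\rbrace\not\subseteq T$ for \emph{every} $T\in\mathscr{C}(G\setminus\lbrace v\rbrace)$ --- by showing that $F\setminus\lbrace v\rbrace\subseteq T$ would still force $T\in\mathscr{C}(G)$ (adjoining an isolated or pendant vertex preserves cut points), whence $\vert T\vert=1$ by hypothesis, and then unmixedness of $J_G$ contradicts $T\in\mathscr{C}(G\setminus\lbrace v\rbrace)$ --- which lets you quote Lemma \ref{um2} for the containment and treat all $T$ uniformly in the component count. Your route is slightly longer, but it buys a uniform final step and makes explicit a fact the paper leaves implicit in its Case II, namely that no singleton $T\in\mathscr{C}(G\setminus\lbrace v\rbrace)$ can contain $F\setminus\lbrace v\rbrace$; both arguments ultimately rest on the same two ingredients, that $N_G(v)$ is a clique and Lemma \ref{um3}.
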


\begin{proof}
We first prove that 
$\mathscr{C}(G\setminus\lbrace v\rbrace)\subseteq\mathscr{C}(G)$. 
Let $T\in\mathscr{C}(G\setminus\lbrace v\rbrace)$ and $u\in T$ be any vertex. 
We will show that $u$ is a cut point of $H=G\setminus (T\setminus\{ u\})$. Suppose that $u$ is not a cut point of $H$. Since  $u$ is a cut point of 
$H^{\prime}=G^{\prime}\setminus (T\setminus\{ u\})$, 
where $G^{\prime}=G\setminus\lbrace v\rbrace$, there exist two 
vertices $u_{1},u_{2}\in V(H^{\prime})$ such that all the paths 
from $u_{1}$ to $u_{2}$ pass through $u$. Moreover, 
there exists a path $\pi=u_{1},\ldots,v^{\prime},v,v^{\prime\prime},\ldots,u_{2}$
in $H$, that does not pass through $u$. Since $F$ is a clique and $\lbrace v^{\prime},v\rbrace,\lbrace v,v^{\prime\prime}\rbrace\in E(H)$, we have $\lbrace v^{\prime},v^{\prime\prime}\rbrace\in E(H)$ and so $\lbrace v^{\prime},v^{\prime\prime}\rbrace\in E(H^{\prime})$. Hence we can obtain a new path $\pi^{\prime}=u_{1},\ldots,v^{\prime},v^{\prime\prime},\ldots,u_{2}$ that is contained in $H^{\prime}$ 
and does not pass through $u$, a contradiction. Therefore $u$ is a cut point of $H$ and hence $T\in \mathscr{C}(G)$.\par 

\noindent\textbf{Case I.} Let $T\in\mathscr{C}(G\setminus\lbrace v\rbrace)\subseteq\mathscr{C}(G)$, such that $\vert T\vert\neq 1$. Let $c(T)$ be the number of connected components when we remove $T$ from $G$. Since $v$ is a free vertex, one of these components contains $v$ and let that component be $G^{v}$. So, in $(G\setminus\lbrace v\rbrace)\setminus T$, that component 
is $G^{v}\setminus\lbrace v\rbrace\neq\phi$ (since $F\setminus\lbrace v\rbrace\not\subseteq T$) and other components 
remain the same as in $G\setminus T$. Therefore, the number of 
connected components in $(G\setminus\{ v \})\setminus T$ is also $c(T)$. Since $J_{G}$ is unmixed, it follows from Lemma \ref{um3} that $c_{G\setminus\{v\}}(T)=\vert T\vert +1$.\par 

\noindent\textbf{Case II.} Let $T\in\mathscr{C}(G\setminus\lbrace v\rbrace)\subseteq\mathscr{C}(G)$, 
such that $\vert T\vert= 1$. Let 
$c(T)$ be the number of connected components in $G\setminus T$. Then from Lemma \ref{um3}, $c(T)=2$ 
as $J_{G}$ is unmixed. Let $T=\lbrace i\rbrace$. Then $i$ is a 
cut point of $(G\setminus\{ v\})\setminus (T\setminus\{ i\}) =G\setminus\lbrace v\rbrace$. So the number of connected components, 
after removing $T$ from $G\setminus\lbrace v\rbrace$, is at least $2$. 
Moreover, corresponding to each connected component of $(G\setminus \{v\})\setminus T$, we get a connected 
component in $G\setminus T$. Since $c(T)=2$, $c_{G\setminus\{v\}}(T)=2$. \par 
Now $G\setminus\{ v\}$ is connected as $v$ is a free vertex. Therefore, for all $T\in\mathscr{C}(G\setminus\lbrace v\rbrace)$, we have $c_{G\setminus\{v\}}(T)=\vert T\vert +1$. Hence, it follows from Lemma \ref{um3} 
that $J_{G\setminus\lbrace v\rbrace}$ is unmixed.
\end{proof}

\begin{proposition}\label{um5}
Let $G$ be a graph such that $J_{G}$ is unmixed and $v\in V(G)$. Let $G\setminus \lbrace v\rbrace$ be connected and $J_{G\setminus\lbrace v\rbrace}$ be unmixed. 
Then the following hold good:
\begin{enumerate}
\item[(i)] $v$ is a free vertex in $\Delta(G)$\\
and
\item[(ii)] $F\setminus\lbrace v\rbrace\not\subseteq T$ for each $T\in\mathscr{C}(G\setminus\{v\})$, where $F$ is the facet of $\Delta(G)$ with $v\in F$.
\end{enumerate}
\end{proposition}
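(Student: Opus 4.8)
The overall plan is to pass between $\mathscr{C}(G)$ and $\mathscr{C}(G\setminus\{v\})$ and to compare the numerical criterion of Lemma \ref{um3} on the two graphs, proving both parts by contradiction. For part (i), suppose $v$ is not a free vertex of $\Delta(G)$. By Proposition \ref{um1} there is a $T\in\mathscr{C}(G)$ with $v\in T$; set $H=G\setminus\{v\}$ and $T'=T\setminus\{v\}$. The key observation is that, because $v\in T$, the vertex set $V(H)\setminus T'$ equals $V(G)\setminus T=\overline{T}$; hence for every $u\in T'$ the induced subgraph $H_{(V(H)\setminus T')\cup\{u\}}$ is literally the same graph as $G_{\overline{T}\cup\{u\}}$ (neither contains $v$), and likewise $H_{V(H)\setminus T'}=G_{\overline{T}}$. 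As $T\in\mathscr{C}(G)$, each such $u$ is a cut vertex of $G_{\overline{T}\cup\{u\}}$, so $T'\in\mathscr{C}(H)$. Now Lemma \ref{um3}, applied to the unmixed ideal $J_{H}$, says $H_{V(H)\setminus T'}$ has $|T'|+1$ components, while Lemma \ref{um3} applied to $J_{G}$ says $G_{\overline{T}}$ has $|T|+1$ components; since these are the same graph and $|T'|=|T|-1$, we obtain $|T|=|T|+1$, a contradiction. Hence $v$ is free.

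For part (ii), by part (i) we may let $F$ be the unique facet of $\Delta(G)$ containing $v$. Since $J_{G}$ is unmixed, $G$ is connected, so $v$ is not isolated and $|F|\ge 2$; together with the hypothesis $|F|\ne 2$ this gives $|F|\ge 3$, so $F\setminus\{v\}$ has at least two elements. Assume for contradiction that $F\setminus\{v\}\subseteq T$ for some $T\in\mathscr{C}(G)$; since $v$ is free, Proposition \ref{um1} forces $v\notin T$. I would first record that $N_{G}(v)=F\setminus\{v\}$: the inclusion $\supseteq$ holds because $F$ is a clique through $v$, and $\subseteq$ because every edge at $v$ lies in the unique maximal clique $F$. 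Consequently all neighbours of $v$ lie in $T$, so $v$ is an isolated vertex of $G_{\overline{T}}$; thus $\{v\}$ is one connected component of $G_{\overline{T}}$, and $H_{V(H)\setminus T}=G_{\overline{T}}\setminus\{v\}$ has exactly $c(T)-1$ components, where $H=G\setminus\{v\}$.

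The core step is then to show $T\in\mathscr{C}(H)$. For $u\in T$ not adjacent to $v$ this is immediate: $v$ is an isolated vertex of $G_{\overline{T}\cup\{u\}}$, and deleting an isolated vertex does not change whether another vertex is a cut vertex, so $u$ remains a cut vertex of $H_{(V(H)\setminus T)\cup\{u\}}$. The delicate case is $u\in N_{G}(v)=F\setminus\{v\}$: here $v$ sits in $G_{\overline{T}\cup\{u\}}$ as a pendant vertex attached to $u$ (all other neighbours of $v$ have been removed with $T$), so a priori deleting $v$ could turn $u$ from a cut vertex into a non-cut vertex. This is exactly where the hypothesis $|F|\ne 2$ has to be exploited: since $F\setminus\{v\}$ is a clique with at least two elements, all lying in $T$, one argues that $u$ still separates its component of $H_{(V(H)\setminus T)\cup\{u\}}$. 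Once $T\in\mathscr{C}(H)$ is established, Lemma \ref{um3} for the unmixed ideal $J_{H}$ gives $c(T)-1=|T|+1$ while Lemma \ref{um3} for $J_{G}$ gives $c(T)=|T|+1$, which is impossible; hence no such $T$ exists, and (ii) follows. I expect this transfer of the cut-vertex property at the neighbours of $v$ from $G$ to $G\setminus\{v\}$ to be the main obstacle, since it genuinely fails when $v$ is a leaf — the case $|F|=2$ — and so must really use that $F$ is a clique of size at least three; the remainder is bookkeeping with the identifications of induced subgraphs and two applications of Lemma \ref{um3}.
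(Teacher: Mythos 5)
Your proof of part (i) is correct and is essentially the paper's own argument: from $T\in\mathscr{C}(G)$ with $v\in T$ you pass to $T\setminus\{v\}\in\mathscr{C}(G\setminus\{v\})$ via the identification $G_{\overline{T}}=(G\setminus\{v\})_{\overline{T}}$, and then compare the two instances of Lemma \ref{um3}. No issue there.

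Part (ii), however, has a genuine gap at exactly the point you flag yourself. Your plan is to show $T\in\mathscr{C}(G\setminus\{v\})$ and then contradict the unmixedness of $J_{G\setminus\{v\}}$ by the count $c(T)-1=|T|\neq|T|+1$. The step you leave unproved --- that each $u\in F\setminus\{v\}\subseteq T$ remains a cut vertex of $(G\setminus\{v\})_{\overline{T}\cup\{u\}}$ --- is not a routine verification, and it does not follow from the ingredients you cite ($|F|\geq 3$ together with $F\setminus\{v\}$ being a clique contained in $T$). Concretely, take $G$ on $\{v,a,b,w\}$ with edges $\{v,a\},\{v,b\},\{a,b\},\{a,w\},\{b,w\}$. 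Then $v$ is free with facet $F=\{v,a,b\}$, so $|F|=3$, and $T=\{a,b\}=F\setminus\{v\}$ lies in $\mathscr{C}(G)$ (both $G_{\{v,a,w\}}$ and $G_{\{v,b,w\}}$ are paths through $a$, resp.\ $b$); yet $a$ is not a cut vertex of $(G\setminus\{v\})_{\{a,w\}}$, so $T\notin\mathscr{C}(G\setminus\{v\})$. Here $J_{G}$ fails to be unmixed, so the proposition itself is not contradicted, but your intermediate claim is: any correct proof of that claim must use the unmixedness hypotheses, and you give no indication of how. The paper runs the argument in the opposite direction, which sidesteps this difficulty: from the unmixedness of $J_{G\setminus\{v\}}$ and the fact that $(G\setminus\{v\})_{\overline{T}}$ has only $|T|$ components, it concludes $T\notin\mathscr{C}(G\setminus\{v\})$, hence there is some $i\in T$ that is a cut vertex of $G_{\overline{T}\cup\{i\}}$ but not of $G_{\overline{T}\cup\{i\}}\setminus\{v\}$; since $v$ is isolated or pendant at $i$ in $G_{\overline{T}\cup\{i\}}$ (because $F\setminus\{v\}\subseteq T$), this forces $i\in F$ and that the only separation effected by $i$ is of $\{v\}$, whence $F=\{v,i\}$ and $|F|=2$. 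You should either adopt that direction or supply the missing argument for your version of the key step.
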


\begin{proof}
Let $T\in\mathscr{C}(G)$ with $v\in T$. Since $J_{G}$ is unmixed, from Lemma \ref{um3}, we have $c_{G}(T)=\vert T\vert +1$. As $v\in T$, we have 
$G[\overline{T}]=(G\setminus\lbrace v\rbrace)[\overline{T}]$. 
So for each $i(\neq v)\in T$, we have 
$ G[\overline{T}\cup\{ i\}]=(G\setminus\{ v\})[\overline{T}\cup\{ i\}]$. Therefore $T\setminus\lbrace v\rbrace$ satisfies 
the cut point property for $G\setminus\lbrace v\rbrace$. Since 
$J_{G\setminus\lbrace v\rbrace}$ is unmixed and $G\setminus\{v\}$ is connected, the number of connected 
components in 
$(G\setminus\{ v\})\setminus (T\setminus\{ v\})=G\setminus T$ is $\vert T\setminus\lbrace v\rbrace\vert +1\neq \vert T\vert+1$, which is a contradiction. Therefore, 
$v\not\in T$ for each $T\in\mathscr{C}(G)$. Hence, by Proposition 
\ref{um1}, $v$ is a free vertex in $\Delta(G)$.
\par 

Let $F$ be the facet of $\Delta(G)$ with $v\in F$. Suppose 
$F\setminus\lbrace v\rbrace\subseteq T$ for some 
$T\in\mathscr{C}(G\setminus\{v\})$. We know $\mathscr{C}(G\setminus\{v\})\subseteq \mathscr{C}(G)$. Since $J_{G}$ is unmixed, the number of connected components in $G\setminus T$ is $c_{G}(T)=\vert T\vert +1$. One of these components is only $\lbrace v\rbrace$ as 
$v$ is a free vertex and $F\setminus\lbrace v\rbrace\subseteq T$. 
Therefore, the number of connected components in $(G\setminus\lbrace v\rbrace)\setminus T$ is $\vert T\vert$, which is a contradiction by Lemma \ref{um3} to the fact that $J_{G\setminus\lbrace v\rbrace}$ is unmixed.
\end{proof}

\begin{corollary}\label{um6}
Let $G$ be a graph with $v\in V(G)$, such that $v$ is a free vertex 
in $\Delta(G)$ and $F$ is the facet of $\Delta(G)$ containing $v$. 
Suppose that for all $T\in\mathscr{C}(G)$ we have $F\setminus\lbrace v\rbrace\not\subseteq T$, then the following conditions are equivalent:
\begin{enumerate}
\item[(a)] $J_{G}$ is unmixed;
\item[(b)] $J_{G\setminus\lbrace v\rbrace}$ is unmixed.
\end{enumerate}
\end{corollary}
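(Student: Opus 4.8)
The plan is to prove the two implications separately, leaning heavily on the three lemmas and Proposition \ref{um4} already established.

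For (a) $\Rightarrow$ (b), I would simply observe that the hypothesis of this corollary --- $F\setminus\lbrace v\rbrace\not\subseteq T$ for \emph{all} $T\in\mathscr{C}(G)$ --- is stronger than the hypothesis of Proposition \ref{um4}, which requires this only when $\vert T\vert\neq 1$. Hence the assumption that $J_{G}$ is unmixed forces $J_{G\setminus\lbrace v\rbrace}$ to be unmixed by a direct application of Proposition \ref{um4}; no new argument is needed in this direction.

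For (b) $\Rightarrow$ (a), the plan is to verify the criterion of Lemma \ref{um3} for $G$, i.e. to show that $c(T)=\vert T\vert +1$ for every $T\in\mathscr{C}(G)$. Fixing such a $T$, I would first invoke Lemma \ref{um2} --- applicable because $v$ is a free vertex of $\Delta(G)$ and $F\setminus\lbrace v\rbrace\not\subseteq T$ by hypothesis --- to conclude that $v\notin T$ and $T\in\mathscr{C}(G\setminus\lbrace v\rbrace)$. Then, since $J_{G\setminus\lbrace v\rbrace}$ is unmixed, Lemma \ref{um3} applied to $G\setminus\lbrace v\rbrace$ gives that $(G\setminus\lbrace v\rbrace)_{\overline{T}}=G_{\overline{T}}\setminus\lbrace v\rbrace$ has exactly $\vert T\vert +1$ connected components. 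The remaining task is to transfer this count back to $G_{\overline{T}}$, i.e. to show that deleting $v$ does not change the number of connected components.

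The main obstacle, such as it is, lies in this last step, and I would handle it as follows. Since $v$ is a free vertex of $\Delta(G)$, its closed neighbourhood lies in the unique facet $F$, so $N_{G}(v)=F\setminus\lbrace v\rbrace$ and $v$ is a simplicial vertex of $G$; the same then holds in the induced subgraph $G_{\overline{T}}$ (note $v\in\overline{T}$ because $v\notin T$), so $v$ is not a cut vertex of $G_{\overline{T}}$. On the other hand, the hypothesis $F\setminus\lbrace v\rbrace\not\subseteq T$ says precisely that $v$ retains a neighbour in $\overline{T}$, so $\lbrace v\rbrace$ is not a connected component of $G_{\overline{T}}$ by itself. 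These two facts together show that $G_{\overline{T}}$ and $G_{\overline{T}}\setminus\lbrace v\rbrace$ have the same number of connected components, namely $\vert T\vert +1$, which is $c(T)$. As $T\in\mathscr{C}(G)$ was arbitrary, Lemma \ref{um3} then yields that $J_{G}$ is unmixed, completing the proof. Everything apart from this short component-counting bookkeeping is a one-line citation of Lemma \ref{um2}, Lemma \ref{um3}, or Proposition \ref{um4}.
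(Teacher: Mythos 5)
Your proposal is correct and follows essentially the same route as the paper: both directions rest on Lemma \ref{um2} to match up $\mathscr{C}(G)$ with $\mathscr{C}(G\setminus\lbrace v\rbrace)$, on the observation that deleting the free vertex $v$ does not change the number of connected components of $G_{\overline{T}}$ (the paper cites the proof of Proposition \ref{um4} for this; you re-derive it via the fact that $v$ is simplicial and non-isolated in $G_{\overline{T}}$), and on the criterion of Lemma \ref{um3}. Your handling of (a) $\Rightarrow$ (b) as a one-line application of Proposition \ref{um4} is a harmless streamlining of the same argument.
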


\begin{proof}
From Lemma \ref{um2}, we have $\mathscr{C}(G)=\mathscr{C}(G\setminus\lbrace   v\rbrace)$. From the proof of Proposition \ref{um4}, we have that the 
number of connected components in $G[\overline{T}]$ is equal 
to the number of connected components in 
$(G\setminus\lbrace v\rbrace)[\overline{T}]$, for all 
$T\in \mathscr{C}(G)=\mathscr{C}(G\setminus\lbrace  v\rbrace)$. 
Since $v$ is a free vertex, we have 
$G\setminus \lbrace v\rbrace$ is connected and hence the proof 
follows from Lemma \ref{um3}.
\end{proof}

Suppose $G$ is a graph, such that $J_{G}$ is unmixed. From the above 
results, especially Corollary \ref{um6},   
we have seen that removing some special vertices from $G$, we get some induced subgraphs of $G$, whose corresponding binomial edge ideals are 
also unmixed. Therefore, a natural question arise: whether one 
can classify all such subgraphs of a graph $G$. The same question 
can be asked if we replace unmixed by Cohen-Macaulay. The rest of this section is devoted to the following classification problem: Given a graph $G$ that is closed with Cohen-Macaulay (respectively unmixed) $J_{G}$, classify all closed induced subgraphs of $G$ which have Cohen-Macaulay (respectively unmixed) binomial edge ideals.

\begin{corollary}\label{um7}
Let $G$ be a graph such that $J_{G}$ is Cohen-Macaulay. Suppose 
$G\setminus\lbrace v\rbrace$ is connected and $J_{G\setminus\lbrace v\rbrace}$ 
is Cohen-Macaulay. Then,
\begin{enumerate}
\item[(i)] $v$ is a free vertex in $\Delta(G)$\\
and 
\item[(ii)] $F\setminus\lbrace v\rbrace\not\subseteq T$, 
for each $T\in\mathscr{C}(G\setminus\{v\})$, where $F$ is the facet 
of $\Delta(G)$ containing $v$.
\end{enumerate}
\end{corollary}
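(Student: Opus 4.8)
The plan is to reduce Corollary \ref{um7} to the unmixed case already handled in Proposition \ref{um5}. The key observation is that Cohen-Macaulayness implies unmixedness: if $J_{G}$ is Cohen-Macaulay, then $J_{G}$ is unmixed, and likewise $J_{G\setminus\lbrace v\rbrace}$ Cohen-Macaulay forces $J_{G\setminus\lbrace v\rbrace}$ to be unmixed. This is a standard fact in commutative algebra (a Cohen-Macaulay ideal is equidimensional, hence all its associated primes — which here are the minimal primes $P_{T}$ — have the same height), and it applies to $J_{G}$ since $R/J_{G}$ is a quotient of a polynomial ring, so Cohen-Macaulay implies unmixed. Once we know that both $J_{G}$ and $J_{G\setminus\lbrace v\rbrace}$ are unmixed, the hypotheses of Proposition \ref{um5} are satisfied verbatim: $G\setminus\lbrace v\rbrace$ is connected by assumption, and $v\in V(G)$.

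So the proof itself is essentially one line: First I would invoke that Cohen-Macaulay implies unmixed to get that $J_{G}$ and $J_{G\setminus\lbrace v\rbrace}$ are both unmixed; then I would apply Proposition \ref{um5} directly to conclude (i) that $v$ is a free vertex in $\Delta(G)$, and (ii) that if $\vert F\vert\neq 2$ then $F\setminus\lbrace v\rbrace\not\subseteq T$ for each $T\in\mathscr{C}(G)$, where $F$ is the facet of $\Delta(G)$ containing $v$. Both conclusions of Corollary \ref{um7} are literally the two conclusions of Proposition \ref{um5}, so nothing further is needed.

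There is no real obstacle here — the only thing to be careful about is the implication ``Cohen-Macaulay $\Rightarrow$ unmixed,'' which should be stated (perhaps with a one-clause justification or a citation, e.g. to \cite{hho} or a standard commutative algebra reference) rather than left implicit, since the rest of the section leans on the unmixed machinery of Lemmas \ref{um2} and \ref{um3}. If the paper prefers, one could instead note that the statement parallels Proposition \ref{um5} exactly with ``unmixed'' strengthened to ``Cohen-Macaulay'' on both the hypothesis side, and that the proof of Proposition \ref{um5} used unmixedness only through Lemma \ref{um3}; so passing to the weaker unmixed hypotheses and quoting that proposition is the cleanest route. Either way, the corollary is an immediate consequence of Proposition \ref{um5} and the elementary fact that a Cohen-Macaulay graded quotient ring is equidimensional, hence its defining ideal is unmixed.
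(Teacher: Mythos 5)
Your proposal is correct and matches the paper's own proof, which likewise observes that Cohen-Macaulay implies unmixed and then invokes Proposition \ref{um5} directly. Nothing further is needed.
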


\begin{proof}
$J_{G}$ is Cohen-Macaulay, hence unmixed. The 
proof follows from Proposition \ref{um5}.
\end{proof}

Next, we show that any induced subgraph of a closed graph (respectively $\mathbf{CCM}$ graph) is closed (respectively 
$\mathbf{CCM}$). We prove the results using the labelling of vertices with respect to which the graph is closed, 
and this helps us find a labelling of the subgraph with respect to which it is closed. However, 
it is possible to write a shorter proof if one uses the structure of closed graphs and 
$\mathbf{CCM}$ graphs discussed in Propositions \ref{closedpath}, \ref{closedindecom}, and Theorem \ref{closedcmstruc}.

\begin{lemma}\label{um8}
Subgraph of a closed graph is closed.
\end{lemma}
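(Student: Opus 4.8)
The plan is to use the order-theoretic characterization of closedness recorded in the remark following Theorem~\ref{propintgraph}: a graph is closed precisely when it admits a proper interval ordering, i.e.\ a labelling of its vertices by $[n]$ such that whenever $\{i,k\}$ is an edge with $i<k$, then $\{i,j\},\{j,k\}$ are edges for every $j$ with $i<j<k$. I also take $H$ to be an \emph{induced} subgraph of $G$, as everywhere else in this section (for arbitrary subgraphs the statement is false: the claw $K_{1,3}$ is a subgraph of the closed graph $K_4$, but $K_{1,3}$ is not a proper interval graph, hence not closed by Theorem~\ref{propintgraph}).

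First I would fix a labelling of $V(G)$ by $[n]$ with respect to which $G$ satisfies the proper interval ordering; such a labelling exists because $G$ is closed. Put $W=V(H)\subseteq V(G)$, let $<_H$ be the restriction to $W$ of the order $1<2<\cdots<n$, and relabel the elements of $W$ as $1,2,\ldots,\vert W\vert$ by the order isomorphism $(W,<_H)\to\{1,\ldots,\vert W\vert\}$. The point to isolate is that one need not search for a genuinely new labelling of $H$: I claim this restricted ordering already works.

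Next I would verify directly that $H$, with this labelling, satisfies the proper interval ordering. Suppose $\{i,k\}\in E(H)$ with $i<_H k$, and let $j\in W$ with $i<_H j<_H k$. Since $H$ is induced, $\{i,k\}\in E(H)$ gives $\{i,k\}\in E(G)$, and since $<_H$ is the restriction of the order of $G$, the vertex $j$ lies strictly between $i$ and $k$ in the ordering of $G$ as well. The proper interval ordering of $G$ then yields $\{i,j\},\{j,k\}\in E(G)$, and because $i,j,k\in W$ and $H$ is induced, $\{i,j\},\{j,k\}\in E(H)$. Hence $H$ admits a proper interval ordering, so $H$ is closed.

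There is essentially no serious obstacle here; the only substantive observation is that passing to an induced subgraph cannot create a ``missing edge between'' two endpoints of an edge, which is exactly what inducedness guarantees. (Equivalently, one could argue via interval representations: by Theorem~\ref{propintgraph} and Definition~\ref{intgraph}, $G$ has an interval representation with no proper containments; restricting this family to $V(H)$ still has no proper containments, and since $H$ is induced it is an interval representation of $H$, so $H$ is a proper interval graph and hence closed.)
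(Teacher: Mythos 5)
Your proof is correct and rests on the same underlying idea as the paper's: restrict the proper interval ordering of $G$ to the vertex set of the subgraph and check the ordering condition there. The difference is one of execution. The paper deletes a single vertex $m$, writes down the relabelling explicitly (shift every index above $m$ down by one), and then verifies the proper interval ordering through three cases according to the positions of $i,k$ relative to $m$, finally iterating over vertex deletions; your version works with the order isomorphism $(W,<_H)\to\{1,\ldots,\vert W\vert\}$ abstractly, which handles an arbitrary induced subgraph in a single uniform step and eliminates the case analysis entirely. Your observation that ``subgraph'' must be read as \emph{induced} subgraph is a genuine and worthwhile correction: as you note, $K_{1,3}$ is a (non-induced) subgraph of the closed graph $K_4$ but is not closed --- indeed it is the paper's own example of a non-closed graph --- and the paper's proof, which only ever deletes vertices, implicitly makes the same restriction without saying so. The alternative argument you sketch via interval representations is also valid and gives the same conclusion with even less bookkeeping.
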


\begin{proof}
Let $G$ be a closed graph with respect to a given labelling on $[n]$. 
Let us remove the vertex $m\in V(G)$. Then the vertices of 
$G\setminus\lbrace m\rbrace$ are $1,2,\ldots,m-1,m+1,\ldots,n$. We 
rename these as $1,2,\ldots,m,m+1,\ldots,n-1$, i.e., for a vertex 
$k\in V(G)$, if $k<m$, then label it as $k$, and if $k>m$, 
then label it as $k-1$, in $G\setminus\lbrace m\rbrace$.\medskip

\noindent\textbf{Claim:} $G\setminus\lbrace m\rbrace$ is closed with respect to this labelling.\medskip

\noindent \textit{Proof of the claim.} Let 
$\lbrace i,k\rbrace\in E(G\setminus\lbrace m\rbrace)$, with $i<k$.\par 

\noindent\textbf{Case I.} \, Let $k<m$. Then $\lbrace i,k\rbrace\in E(G)$ implies 
that $\lbrace i,j\rbrace,\lbrace j,k\rbrace\in E(G)$, for all $i<j<k<m$,  
since $G$ is closed. Therefore, 
$\lbrace i,j\rbrace,\lbrace j,k\rbrace\in E(G\setminus\lbrace m\rbrace)$, 
for all $i<j<k$.\par

\noindent\textbf{Case II.} \, Let $i<m$ and $k\geq m$. Then the edge 
$\lbrace i,k\rbrace\in E(G\setminus\lbrace m\rbrace)$ is the same 
edge as $\lbrace i,k+1\rbrace\in E(G)$. Since $G$ is closed, 
$\lbrace i,k+1\rbrace\in E(G)$ implies 
$\lbrace i,j\rbrace,\lbrace j,k+1\rbrace\in E(G)$, for all $i<j<k+1$. 
Let $i<p<k$. If $p<m$ then $\lbrace i,p\rbrace,\lbrace p,k+1\rbrace\in E(G)$ 
and hence $\lbrace i,p\rbrace,\lbrace p,k\rbrace\in E(G\setminus\lbrace m\rbrace)$. Again, if $p\geq m$, then $i<p+1<k+1$ in $V(G)$ implies 
$\lbrace i,p+1\rbrace,\lbrace p+1,k+1\rbrace\in E(G)$ and hence 
$\lbrace i,p\rbrace,\lbrace p,k\rbrace\in E(G\setminus\lbrace m\rbrace)$ 
since $p+1>m$.\par

\noindent\textbf{Case III.}\, Let $i\geq m$. Then 
$\lbrace i,k\rbrace\in E(G\setminus\lbrace m\rbrace)$ 
is the same edge as $\lbrace i+1,k+1\rbrace\in E(G)$. 
Since $G$ is closed, $\lbrace i+1,k+1\rbrace\in E(G)$ implies that 
$\lbrace i+1,j+1\rbrace,\lbrace j+1,k+1\rbrace\in E(G)$, 
for all $m<i+1<j+1<k+1$ and hence $\lbrace i,j\rbrace,\lbrace j,k\rbrace\in E(G\setminus\lbrace m\rbrace)$, for all $i<j<k$. \par 

Combining all the cases we get that if $\lbrace i,k\rbrace\in E(G\setminus\lbrace m\rbrace)$, with $i<k$, then $\lbrace i,j\rbrace,\lbrace j,k\rbrace\in E(G\setminus\lbrace m\rbrace)$ for all $i<j<k$. Therefore, from Theorems  
\ref{intordering} and \ref{propintgraph}, it follows that $G\setminus\lbrace m\rbrace$ is closed. Since $m$ is an arbitrary vertex of $G$, any subgraph of $G$ is closed.
\end{proof}

\begin{theorem}\label{um9}
Let $G$ be a connected graph on $[n]$, which is closed with respect to the given labelling. If $J_{G}$ is Cohen-Macaulay, then $J_{H}$ is Cohen-Macaulay for any subgraph $H$ of $G$.
\end{theorem}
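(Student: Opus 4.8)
The plan is to reduce the statement to the deletion of a single vertex and then apply Proposition~\ref{closedCM} to each connected component separately.

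First I would record the reduction. It suffices to prove the following claim: if $\Gamma$ is a closed graph (not necessarily connected) with $J_\Gamma$ Cohen--Macaulay and $v\in V(\Gamma)$, then $J_{\Gamma\setminus\{v\}}$ is Cohen--Macaulay. Granting this, an induced subgraph $H$ of $G$ is obtained by deleting the vertices of $V(G)\setminus V(H)$ one at a time; by Lemma~\ref{um8} every graph occurring in this process is closed, so the claim applies at each step and yields $J_H$ Cohen--Macaulay. To prove the claim, recall that the binomial edge ideal of a disjoint union of graphs is Cohen--Macaulay if and only if the binomial edge ideal of each connected component is (the corresponding quotient ring is, up to a polynomial extension absorbing the isolated vertices, a $K$-tensor product of the coordinate rings of the components). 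Since deleting $v$ affects only the component $\Gamma_0$ containing $v$, the claim reduces to: $\Gamma_0$ connected and closed, $J_{\Gamma_0}$ Cohen--Macaulay, $v\in V(\Gamma_0)$, and we must show that $J_{H_i}$ is Cohen--Macaulay for every connected component $H_i$ of $\Gamma_0\setminus\{v\}$.

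Next I would set up Proposition~\ref{closedCM} for the $H_i$. By Lemma~\ref{um8} the graph $\Gamma_0\setminus\{v\}$ is closed, hence so is each component $H_i$ (again by Lemma~\ref{um8}), and each $H_i$ is connected. Moreover, in a closed graph the vertex set of every connected component is an interval of consecutive integers: if $p<q$ lie in the same component then any path between them, via closedness, forces every integer strictly between $p$ and $q$ to be adjacent to $p$ and hence to lie in that component. Therefore, after the order-preserving relabelling of $\Gamma_0\setminus\{v\}$ used in the proof of Lemma~\ref{um8}, the labelling inherited by $H_i$ differs from a $1,2,\dots$ labelling only by a global shift, so ``consecutive indices'' are unambiguous and $H_i$ is still closed with this labelling. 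By Proposition~\ref{closedCM} it now suffices to verify condition~(iv) of that proposition for each $H_i$.

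For the verification, fix indices $i_0<j_0<k_0$ with $\{i_0,j_0+1\},\{j_0,k_0+1\}\in E(H_i)$; these are edges of $\Gamma_0\setminus\{v\}$, hence of $\Gamma_0$ once the relabelling is undone. Undoing the relabelling increases by one every index that is at least the new label of $v$; depending on where $v$ sits relative to $i_0,j_0,j_0+1,k_0,k_0+1$ this produces one of finitely many configurations, and in each of them the two edges are again of the shape $\{a,b+1\}$ with $a<b$ and $\{b,c+1\}$ with $b<c$ demanded by condition~(iv) for $\Gamma_0$. Since $J_{\Gamma_0}$ is Cohen--Macaulay, Proposition~\ref{closedCM} supplies the edge $\{a,c+1\}\in E(\Gamma_0)$; translating back, both of its endpoints lie in $V(H_i)$ and components are induced, so $\{i_0,k_0+1\}\in E(H_i)$. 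Thus each $H_i$ satisfies condition~(iv), so each $J_{H_i}$ is Cohen--Macaulay, and Theorem~\ref{um9} follows. The one point that needs genuine care --- and the step I expect to be the main obstacle --- is precisely this last case analysis: one must check that \emph{every} position of $v$, including the boundary cases where $v$ is adjacent in the ordering to one of the five indices above, really yields an admissible instance of condition~(iv) for $\Gamma_0$. (This bookkeeping can be avoided by first showing that, for a connected closed graph, condition~(iv) of Proposition~\ref{closedCM} is equivalent to the graph being a chain of complete graphs in which consecutive maximal cliques meet in exactly one vertex; deleting any single vertex of such a graph visibly produces another such chain, possibly split into two, from which the theorem is immediate.)
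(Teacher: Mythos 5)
Your overall strategy coincides with the paper's: delete one vertex at a time, use Lemma \ref{um8} to keep the graph closed under the order-preserving relabelling, and verify condition (iv) of Proposition \ref{closedCM} for $G\setminus\{v\}$ by a case analysis on the position of $v$ relative to the indices involved. One point where you genuinely improve on the paper: Proposition \ref{closedCM} is stated only for \emph{connected} graphs, and $G\setminus\{v\}$ can be disconnected (e.g.\ when $v$ is a cut vertex of a chain of cliques), a point the paper's proof passes over silently. Your reduction to connected components via the tensor-product decomposition of $R/J_{G\setminus\{v\}}$, together with the (correct, and easily verified) observation that the components of a closed graph occupy intervals of consecutive labels, closes that hole cleanly.

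However, the one substantive assertion you make about the case analysis is false, and precisely at the boundary you yourself flag. With the relabelling of Lemma \ref{um8} (labels $\ge m$ shift down by one, $m$ the old label of $v$), consider the case $i<m-1$, $j=m-1$. The edges $\{i,j+1\},\{j,k+1\}$ of $H$ then correspond to $\{i,j+2\}$ and $\{j,k+2\}$ in $G$. These are \emph{not} "again of the shape demanded by condition (iv)": the first reads $\{i,(j+1)+1\}$ and the second $\{j,(k+1)+1\}$, so the middle indices $j+1$ and $j$ do not match and condition (iv) cannot be invoked directly. One must first use closedness of $G$ (from $i<j+1<j+2$ and $\{i,j+2\}\in E(G)$ deduce $\{i,j+1\}\in E(G)$) and only then apply condition (iv) to the pair $\{i,j+1\},\{j,k+2\}$; this is exactly what the paper's Case III does. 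So the deferred "bookkeeping" is not mere bookkeeping: at least one configuration needs an extra ingredient beyond what you claim, and as written your verification step does not go through. Once this case is repaired the argument is complete. Your parenthetical alternative (connected closed Cohen--Macaulay graphs are chains of complete graphs meeting consecutively in a single vertex, a class visibly stable under vertex deletion up to disconnection) is a valid and arguably cleaner route, but it too is only asserted, not proved.
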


\begin{proof}
Let $m\in V(G)$ be any vertex. Remove $m$ and consider the graph 
$H=G\setminus\lbrace m\rbrace$, with the labeling mentioned in the 
Lemma \ref{um8}, for which $H$ is closed. Suppose 
$\lbrace i,j+1\rbrace$ with $i<j$, and $\lbrace j,k+1\rbrace$ with $j<k$, 
are edges of $H$. Then, to prove $J_{H}$ is Cohen-Macaulay we have to 
show that $\lbrace i,k+1\rbrace\in E(H)$.\par

\noindent\textbf{Case I ($i>m-1$).} In this case $m-1<i<j<k$. Therefore, 
$$\lbrace i,j+1\rbrace,\lbrace j,k+1\rbrace\in E(H) \Rightarrow \lbrace i+1,j+2\rbrace,\lbrace j+1,k+2\rbrace\in E(G).$$ 
By Proposition \ref{closedCM}, we have $\lbrace i+1,k+2\rbrace\in E(G)$, 
since $J_{G}$ is Cohen-Macaulay. Hence 
$\lbrace i,k+1\rbrace\in E(H)$, since $i+1>m$.\par 

\noindent\textbf{Case II ($i\leq m-1,\,j>m-1$).} In this case $m-1<j<k$. Therefore 
$$\lbrace i,j+1\rbrace,\lbrace j,k+1\rbrace\in E(H) \Rightarrow \lbrace i,j+2\rbrace,\lbrace j+1,k+2\rbrace\in E(G).$$ By Proposition \ref{closedCM}, we have $\lbrace i,k+2\rbrace\in E(G)$, since $J_{G}$ is Cohen-Macaulay. 
Therefore, $\lbrace i,k+1\rbrace\in E(H)$, since $i\leq m-1$ and $k+2>m$.\par

\noindent\textbf{Case III ($i<m-1,\,j= m-1$).} In this case $m-1=j<k$. Therefore 
$$\lbrace i,j+1\rbrace,\lbrace j,k+1\rbrace\in E(H) \Rightarrow  \lbrace i,j+2\rbrace,\lbrace j,k+2\rbrace\in E(G).$$ 
$G$ is closed, and therefore, $\lbrace i,j+2\rbrace\in E(G)$ 
implies $\lbrace i,j+1\rbrace\in E(G)$. 
By Proposition \ref{closedCM}, we have $\lbrace i,k+2\rbrace\in E(G)$, 
since $J_{G}$ is Cohen-Macaulay. Hence, $\lbrace i,k+1\rbrace\in E(H)$, 
since $k+2>m$.\par

\noindent\textbf{Case IV ($i<j< m-1,\,k\geq m-1$).} In this case, $$\lbrace i,j+1\rbrace,\lbrace j,k+1\rbrace\in E(H) \Rightarrow \lbrace i,j+1\rbrace,\lbrace j,k+2\rbrace\in E(G).$$ 
By Proposition \ref{closedCM}, we have $\lbrace i,k+2\rbrace\in E(G)$ , 
since $J_{G}$ is Cohen-Macaulay. Therefore, $\lbrace i,k+1\rbrace\in E(H)$, 
since $k+2>m$.\par

\noindent\textbf{Case V ($i<j<k<m-1$).} In this case, $$\lbrace i,j+1\rbrace,\lbrace j,k+1\rbrace\in E(H) \Rightarrow \lbrace i,j+1\rbrace,\lbrace j,k+1\rbrace\in E(G).$$ By Proposition \ref{closedCM}, we have 
$\lbrace i,k+1\rbrace\in E(G)$, since $J_{G}$ is Cohen-Macaulay. Therefore, 
$\lbrace i,k+1\rbrace\in E(H)$, since $k+1<m$.\par

Combining all the cases, we get that if $\lbrace i,j+1\rbrace$ with $i<j$, 
and $\lbrace j,k+1\rbrace$ with $j<k$, are edges of $H$ then 
$\lbrace i,k+1\rbrace$ is  an edge of $H$. Hence, from 
Proposition \ref{closedCM}, it follows that $J_{H}$ is Cohen-Macaulay. 
Therefore, $m$ being an arbitrary vertex, 
$J_{H}$ is Cohen-Macaulay for every subgraph $H$ of $G$.
\end{proof}

Let $G$ be a graph and $v$ be a vertex of $G$. We define the graph $G_{v}$ as follows:
\begin{enumerate}
\item[$\bullet$] $V(G_{v})=V(G),$
\item[$\bullet$] $E(G_{v})=E(G)\cup \{\{i,j\}\mid i,j\in \mathcal{N}_{G}(v)\,\,\text{with}\,\, i\neq j\}.$
\end{enumerate}
\medskip

In \cite{bms1}, Bolognini et al. have introduced the concept of and 
have proved that for a graph $G$, Cohen-Macaulay property of $J_{G}$ implies the 
accessible property of $G$. They have conjectured about the converse in \cite[Conjecture 1.1]{bms1}. 
The following Proposition \ref{acc-construct} gives a technique to make an unmixed 
binomial edge ideal accessible by adding some special edges. Let us denote the class 
of accessible graphs by $\mathbf{ACC}$,

\begin{proposition}\label{acc-construct}
Let $G$ be a graph such that $J_{G}$ is unmixed. 
We get an accessible graph $H$ with $V(H)=V(G)$, by adding some special edges to $G$.
\end{proposition}

\begin{proof}
If $G$ is accessible, then consider $H=G$. Suppose $G$ is not accessible, then 
there exists $T_{1}\in \mathscr{C}(G)$ such that $T$ is not accessible. Take any 
$v_{1}\in T_{1}$ and consider the graph $G_{v_{1}}$. By \cite[Lemma 4.5]{bms1}, 
$\mathscr{C}(G_{v_{1}})=\{T\in \mathscr{C}(G)\mid v_{1}\not\in T\}$ and 
$J_{G_{v_{1}}}$ is unmixed. If $G_{v_{1}}$ is accessible, then consider 
$H=G_{v_{1}}$ and if not accessible, then repeat the process. Due to 
\cite[Lemma 4.5]{bms1}, after finite steps, we get a graph 
$H=(\ldots((G_{v_{1}})_{v_{2}})\ldots)_{v_{r}}$, such that $H$ is accessible.
\end{proof}

\begin{figure}[H]
\centering
\begin{subfigure}{0.45\textwidth}
\begin{tikzpicture}
  [scale=.6,auto=left,every node/.style={circle,scale=0.5}]
 
  \node[draw] (n1) at (-1,3)  {$1$};
  \node[draw] (n2) at (0,0)  {$2$};
  \node[draw] (n3) at (1.5,3) {$3$};
   \node[draw] (n4) at (3,-1) {$4$};
   \node[draw] (n5) at (4.5,3) {$5$};
  \node[draw] (n6) at (6,0) {$6$};
  \node[draw] (n7) at (7,3) {$7$};

  \foreach \from/\to in {n1/n2,n2/n3,n3/n4,n4/n5, n5/n6, n6/n7, n3/n6, n2/n5}
    \draw[] (\from) -- (\to);
   
\end{tikzpicture}
\caption{A non-accessible Graph $G$}\label{fignon-acc}
\end{subfigure}
\begin{subfigure}{0.45\textwidth}
\begin{tikzpicture}
  [scale=.6,auto=left,every node/.style={circle,scale=0.5}]
 
  \node[draw] (n1) at (-1,3)  {$1$};
  \node[draw] (n2) at (0,0)  {$2$};
  \node[draw] (n3) at (1.5,3) {$3$};
   \node[draw] (n4) at (3,-1) {$4$};
   \node[draw] (n5) at (4.5,3) {$5$};
  \node[draw] (n6) at (6,0) {$6$};
  \node[draw] (n7) at (7,3) {$7$};

  \foreach \from/\to in {n1/n2,n2/n3,n3/n4,n4/n5, n5/n6, n6/n7, n3/n6, n2/n5, n2/n4, n2/n6, n4/n6}
    \draw[] (\from) -- (\to);
   
\end{tikzpicture}

\caption{The graph $G_{3}$, which is accessible}\label{figacc}
	\end{subfigure}

	\caption{A non-accessible graph $G$ with unmixed $J_{G}$ and the accessible 
	graph $G_{3}$ with Cohen-macaulay $J_{G_{3}}$.}\label{figglu}
\end{figure}
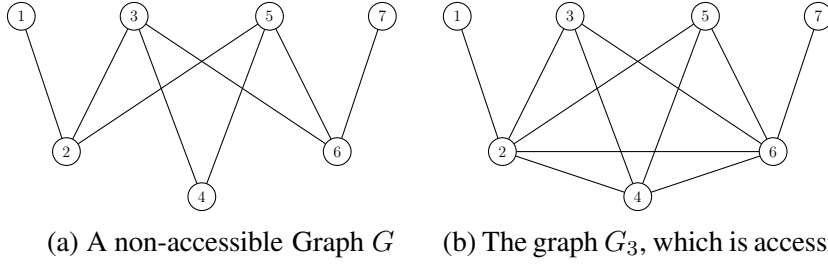

\begin{example}{\rm
Consider the graph $G$ in Figure (2a). It was shown in 
\cite[Example 2.3]{bms1} that $J_{G}$ is unmixed but $G$ is not accessible. In fact, 
$\mathscr{C}(G)=\{\phi, \{2\},\{6\}, \{2,6\},\{3,5\}, \{2,4,6\}\}$. As in the proof 
of Proposition \ref{acc-construct}, we choose the non-accessible cutset $\{3,5\}$ of 
$G$ and take $3\in \{3,5\}$. Now consider the graph $G_{3}$ (see Figure 2(b)). 
Then $J_{G_{3}}$ is unmixed and $\mathscr{C}(G_{3})=\{\phi, \{2\},\{6\}, \{2,6\}, \{2,4,6\}\}$. 
Thus, $G_{3}$ is accessible and using Macaulay2, we get $J_{G_{3}}$ is Cohen-Macaulay.
}
\end{example}

In Proposition \ref{acc-construct}, we see that 
for a graph $G$ with unmixed $J_{G}$, there exists an $\mathbf{ACC}$-completion 
of $G$. Thus, we propose the following question:

\begin{question}\label{quesacc-comp}{\rm 
For a graph $G$ with unmixed $J_{G}$, is the method discussed in 
Proposition \ref{acc-construct} the only way to get a $\mathbf{ACC}$-completion of 
$G$? Also, what will be $\mathbf{ACC}(G)$? Can we find a polynomial-time algorithm 
to compute $\mathbf{ACC}(G)$?
}
\end{question}

Let $G$ be a graph with $V(G)=\{x_{1},\ldots,x_{n}\}$. The \textit{whisker graph} or \textit{suspension} of $G$, denoted by $W_{G}$, is the graph attaching $n$ new vertices $\{y_{1},\ldots,y_{n}\}$ to $G$ as follows:
\begin{enumerate}
\item[$\bullet$] $V(W_{G})=\{x_{1},\ldots,x_{n},y_{1},\ldots,y_{n}\},$
\item[$\bullet$] $E(W_{G})=E(G)\cup \{\{x_{i},y_{i}\}\mid i=1,\ldots,n\}.$
\end{enumerate}
\medskip

In the following Theorem \ref{whiskercm}, we give sufficient and necessary 
conditions for a whisker graph to be Cohen-Macaulay.

\begin{theorem}\label{whiskercm}
Let $G$ be a graph. Then the following are equivalent.
\begin{enumerate}[(i)]
\item $J_{W_{G}}$ is Cohen-Macaulay.
\item $J_{W_{G}}$ is unmixed.
\item $G$ is complete.
\end{enumerate}
\end{theorem}

\begin{proof}
(i) $\Rightarrow$ (ii) is well known. 
\smallskip

\noindent(ii) $\Rightarrow$ (iii): Let $V(G)=\{x_{1},\ldots,x_{n}\}$ 
and $V(W_{G})=V(G)\cup \{y_{1},\ldots, y_{n}\}$ be such that $\{x_{i},y_{i}\}\in E(W_{G})$ 
for $1\leq i\leq n$. Suppose $G$ is not complete. Then there exist $x_{j},x_{k}\in V(G)$ 
such that $\{x_{j},x_{k}\}\not\in E(G)$. Now consider $T=\mathcal{N}_{G}(x_{j})=\{x_{j_{1}},\ldots,x_{j_{s}}\}$. Then each $x\in T$ is a cut vertex of $W_{G}\setminus (T\setminus \{x\})$ 
and hence, $T\in \mathscr{C}(W_{G})$. Note that $W_{G}\setminus T$ has 
$\{y_{j_{1}}\},\ldots, \{y_{j_{s}}\}$ as connected components, with one containing  
$x_{j}$ and one containing $x_{k}$. Therefore, $c_{W_{G}}(T)>\vert T\vert +1$, 
a contradiction to the fact that $J_{W_{G}}$ is unmixed. Hence $G$ is complete.
\smallskip

\noindent(iii) $\Rightarrow$ (i): Let $G$ be complete. Then $\mathcal{B}(W_{G})$ is a tree and every block of $W_{G}$ is complete. Thus, $J_{W_{G}}$ is Cohen-Macaulay by \cite[Theorem 1.1]{ehh}.
\end{proof}

\section*{Acknowledgements}
The authors wish to thank Neeldhara Misra and Saraswati Nanoti for their valuable 
help in understanding the complexity problem. 

\bibliographystyle{amsalpha}

\begin{thebibliography}{A}

\bibitem{mont} $\grave{\text{A}}$lvarez Montaner, J. \emph{Local cohomology of binomial edge ideals and their generic initial ideals}, Collect. Math. 71: no. 2, 331--348 (2020).

\bibitem{bms} Bolognini, D., Macchia, A., Strazzanti, F. \emph{Binomial edge ideals of bipartite graphs}, European J. Combin., Vol. 70, pp. 1--25 (2018).

\bibitem{bms1} Bolognini, D., Macchia, A., Strazzanti, F. \emph{Cohen-Macaulay binomial edge
ideals and accessible graphs}, J. Algebraic Combin. 55: no. 4, 1139--1170 (2022).

\bibitem{ch94} Conca, A., Herzog, J. \emph{On the Hilbert function of determinantal rings and their canonical module}, Proc. Amer. Math. Soc., 122(3): 677--681 (1994).

\bibitem{closed1} Cox, David A., Erskine, A. \emph{On closed graphs I}, Ars Combin. 120, pp. 259--274 (2015).

\bibitem{dross21} Dross, F., Hilaire, C., Koch, I., Leoni, V., Pardal, N., Lopez Pujato, M. I., Santos, V.F. \emph{On the proper interval completion problem within some chordal subclasses}, arXiv: 2110.07706 [cs.DM], 13 pp (2021), https://arxiv.org/abs/2110.07706.

\bibitem{ehh} Ene, V., Herzog, J., Hibi, T. \emph{Cohen-Macaulay binomial edge ideals}, Nagoya
Math. J., Vol. 204, pp. 57--68 (2011).

\bibitem{F} Faridi, S. \emph{Cohen-Macaulay properties of square-free monomial ideals}, J. Combin., Theory Ser. A 109(2), 299--329 (2005).

\bibitem{gks94} Golumbic, M. C., Kaplan, H., Shamir, R. \emph{On the complexity of DNA physical mapping} Adv. in Appl. Math. 15: no. 3, 251--261 (1994).

\bibitem{hhhrkara} Herzog, J., Hibi, T., Hreinsdottir, F., Kahle, T., Rauh, J. 
\emph{Binomial edge ideals and conditional independence statements}, 
Advances in Applied Mathematics 45:317--333 (2010).

\bibitem{hho} Herzog, J., Hibi, T. and Ohsugi, H. \emph{Binomial Ideals}, 
Springer, 321 p., (2018).

\bibitem{hr18} Herzog, J., Rinaldo, G. \emph{On the extremal Betti numbers of binomial edge ideals of block graphs}, Electron. J. Combin. 25, no. 1, Paper No. 1.63, 10 pp (2018).

\bibitem{matcomp} Hogben, L. \emph{Graph theoretic methods for matrix completion problems}, Linear Algebra Appl. 328: no. 1-3, 161--202 (2001).

\bibitem{kaplan99} Kaplan, H., Shamir, R., Tarjan, R. E. \emph{Tractability of parameterized completion problems on chordal, strongly chordal, and proper interval graphs}, SIAM J. Comput. 28: no. 5, 1906--1922 (1999).

\bibitem{km12} Kiani, D., Saeedi Madani, S. \emph{Binomial edge ideals of graphs}, Electron. J. Combin. 19: no. 2, Paper 44, 6 pp (2012).

\bibitem{km} Kiani, D., Saeedi Madani, S. \emph{Some Cohen-Macaulay and unmixed binomial
edge ideals}, Comm. Alg., Vol. 43, 12, pp. 5434--5453 (2015).

\bibitem{netcomp} Kim, M., Leskovec, J. \emph{The network completion problem: Inferring missing nodes and edges in networks}, Proceedings of the 11th International Conference on Data Mining, 47--58, (2011).

\bibitem{bfsdfs} Kozen, D. C. \emph{Depth-First and Breadth-First Search}, The Design and Analysis of Algorithms, Texts and Monographs in Computer Science, Springer, New York, 19--24 (1992).

\bibitem{ks19} Kumar, A., Sarkar, R. \emph{Hilbert series of binomial edge ideals}, Comm. Algebra 47: no. 9, 3830--3841 (2019).

\bibitem{lmrr} Lerda, A., Mascia, C., Rinaldo, G., Romeo, F. \emph{(S2)-condition and CohenMacaulay binomial edge ideals}, https://arxiv.org/abs/2107.04539, 2021.


\bibitem{lo} Looges, P.J., Olariu, S. \emph{Optimal greedy 
algorithms for indifference graphs}, 
TComput. Math. Appl. 25, 15--25 (1993).

\bibitem{mm13} Matsuda, K.; Murai, S. \emph{Regularity bounds for binomial edge ideals}, J. Commut. Algebra 5: no. 1, 141--149 (2013). 

\bibitem{ms} Mohammadi, F., Sharifan, L. \emph{Hilbert function of binomial edge ideals},
Comm. Algebra 42, Vol. 2, 688--703 (2014).

\bibitem{oh} Ohtani, M. \emph{Graphs and ideals generated by some 2-minors}, Comm. Algebra, 39: 905--917 (2011).

\bibitem{paton} Paton, K. \emph{An algorithm for the blocks and cutnodes of a graph}, Commun. ACM, 14: 468--475 (1971).

\bibitem{rst06} Rapaport, I., Suchan, K., Todinca, I. \emph{Minimal proper interval completions. Graph-theoretic concepts in computer science}, Lecture Notes in Comput. Sci., 4271, Springer, Berlin, 217--228 (2006).

\bibitem{raufrin} Rauf, A., Rinaldo, G. \emph{Construction of Cohen-Macaulay 
binomial edge ideals}, Communications in Algebra 42: 238--252 (2014).

\bibitem{cactus} Rinaldo, G. \emph{Cohen-Macaulay binomial edge ideals of cactus graphs}, J.
Algebra Appl. 18: no.4, 18 pp(2019).

\bibitem{rin_smaldev} Rinaldo, G., \emph{Cohen-Macauley binomial edge ideals of small deviation}, Bull. Math. Soc. Sci. Math. Roumanie (N.S.) 56(104): no.4, 497--503 (2013).

\bibitem{vi} Villarreal, R. H. \emph{Cohen-Macaulay graphs}, Manuscripta Math., 66: 277--293 (1990).

\bibitem{vil} Villarreal, R. H. \emph{Monomial Algebras, Monographs and Textbooks in Pure and Applied Mathematics} 238, Marcel Dekker, New York, (2001).

\bibitem{zz00} Zaare-Nahandi, R., Zaare-Nahandi, R. \emph{Gröbner basis and free resolution of the ideal of 2-minors of a 2×n matrix of linear forms}, Comm. Algebra 28: no. 9, 4433--4453 (2000).

\end{thebibliography}

\end{document}